\newcommand{\pad}{\operatorname{pad}}
\newcommand{\minrank}{\operatorname{mrank}}
\newtheorem{theorem}{Theorem}
\newtheorem{lemma}[theorem]{Lemma}
\theoremstyle{definition}
\newtheorem{remark}[theorem]{Remark}
\newtheorem{example}[theorem]{Example}
\newcommand{\id}{\operatorname{id}}
\newcommand{\rank}{\operatorname{rank}}
\newcommand{\R}{{\mathbb R}}
\newcommand{\C}{{\mathbb C}}
\newcommand{\F}{{\mathbb F}}
\newcommand{\Q}{{\mathbb Q}}
\newcommand{\Mat}{\operatorname{Mat}}
\title{On the equivalence between low rank matrix completion and tensor rank}
\author{Harm Derksen}
\thanks{The author was partially supported by NSF grant DMS 1302032.}
\begin{document}
\maketitle
\begin{abstract}
The Rank Minimization Problem asks to find a matrix of lowest rank inside a linear variety of the space of $n\times m$ matrices. The  Low Rank Matrix Completion problem asks to complete a partially filled matrix such that the resulting matrix has smallest possible rank. The Tensor Rank Problem asks to determine the rank of a tensor. 
We show that these three problems are equivalent: each one of the problems can be reduced to the other two.
\end{abstract}

\section{Introduction}
Suppose that $\F$ is a field. We will consider the following computational problems:
\subsection*{Rank Minimization (RM)}
Given matrices $A,B_1,\dots,B_s\in \Mat_{n,m}(\F)$, find $x_1,\dots,x_s\in \F$ for which
$\rank(A+x_1B_1+\cdots+x_sB_s)$ is minimal.
\subsection*{Special Rank Minimization (1--RM)}
This problem is the same as the Rank Minimization Problem, but we assume that the matrices $B_1,\dots,B_s$ have rank $1$.

\subsection*{Low Rank Matrix Completion (LRMC)} Given an $n\times m$ matrix $A$ which is partially filled with entries of $\F$,
fill in the remaining entries such that the resulting matrix has minimal rank.

\subsection*{Tensor Rank (TR)} Given  a tensor $T\in \F^{n_1}\otimes \F^{n_2}\otimes \cdots \otimes \F^{n_d}$
Find pure tensors $v_j=v_j^{(1)}\otimes \cdots \otimes v_j^{(d)}$ for $j=1,2,\dots,r$ such that $T=v_1+\cdots+v_r$
and $r$ is minimal. (This minimal $r$ is called the {\em tensor rank} of $T$, see ~\cite{Hitchcock}. )
\subsection*{Order 3 Tensor Rank (3--TR)} This is the Tensor Rank problem where the order $d$ is equal to $3$.\\



The LRMC problem has many applications. One such application is {\em collaborative filtering}. A typical example is the {\em  Netflix problem}. Suppose that the rows of a matrix correspond to users,
and the columns correspond to movies. Each user has only rated   some of the movies. This leads to a partially filled matrix. To predict whether users would like a certain movie that they have not seen yet, one would like to complete the matrix. Because preferences only depend on a few parameters, it is reasonable to assume that the rank of the completed matrix should be small. The LRMC problem is known to be NP complete (see~\cite[Theorem 3.1]{Peeters}). For the field $\F=\R$, some algorithms have been proposed for the LRMC problem using convex relaxation, for example  \cite{F,CR,CCS,CR}. 

If the matrices $B_1,\dots,B_s$ in the Rank Minimization problems are matrices that have only one nonzero entry, 
then the RM problem becomes an LRMC problem. So the RM problem is more general. Over finite fields, RM has applications to coding theory (\cite{BYBJK, TBD}).
For the real numbers, applications are in control (\cite{FHB,FHB2,GG}), systems identification (\cite{LV}) and Euclidean embedding problems (\cite{LLR,Laurent0}).
Some  algorithms have appeared in \cite{FHB,RFP,LV,CCS,MGC}.

The TR problem is NP-complete if the field  $\F$ is  finite  (\cite{Hastad,Hastad2}) and
NP-hard if the field is $\Q$ (\cite{Hastad,Hastad2}), or if it contains $\Q$ (\cite{HL}). Approximation
of a tensor by another tensors of low rank is known as the PARAFAC (\cite{Harshman}) or CANDECOMP (\cite{CC}) model. There are various hearistic approaches  for finding low-rank approximations. See~\cite{KB,TB} for a discussion.
The problems of finding the rank of a tensor, and to approximate tensors with tensors of low rank
 has many applications, such as the complexity of matrix multiplication, fluorescence spectroscopy,  statistics, psychometrics, geophysics and magnetic resonance imaging. 
In general, the rank of higher tensors more ill-behaved than the rank of a matrix (see~\cite{DSL}).

 We will show that these 5 problems are equivalent to each other by giving explicit reductions. Some reductions are obvious: 1--RM is a special case of RM, 3--TR is a special case of TR and LRMC is a special case of 1--RM. The reduction from LRMC to $3$--TR was done in \cite{Derksen}.
 Other reductions are explicitly described in this paper and these reductions are denoted by double arrows. By composition of reduction steps, we can reduce each of the 5 problems to any other problem. In \cite{BBCM}  the authors give an algorithm
 for tensor decompositions and more generally, multi-homogeneous tensors (see also~\cite{BCMT} for symmetric tensor decompositions).  
 A crucial step (Algorithm 3.1, step (1)) in their algorithm asks 
 for a low rank completion of a Hankel matrix (Theorem 3.3.), so their algorithm can also be  viewed as a reduction from TR to RM.

$$\xymatrix{
& \mbox{RM}\ar@/^/@{=>}[rr]^{\mbox{\small \S\ref{RMto1RM}}} & & \mbox{1--RM}\ar@/^/@{=>}[rr]^{\mbox{\small \S\ref{1RMtoLRMC}}}\ar@{=>}[dd]^{\mbox{\small \S\ref{1RMto3TR}}}\ar@/^/[ll] &&
\mbox{LRMC}\ar@/^/[ll]\ar@{=>}[lldd]^{\mbox{\cite{Derksen}}}\\ 
& &&&&\\
&\mbox{TR}\ar@{=>}[uu]^{\mbox{\small \cite{BBCM},  \S\ref{TRtoRM}}}&& \mbox{3--TR}\ar[ll]\ar@{=>}[lluu]^{\mbox{\small \S\ref{3TRtoRM}}} &&}
$$

\section{Reduction from {\bf RM} to {\bf 1--RM}}\label{RMto1RM}
We first reduce the Rank Minimization Problem with $s=1$ to the Special Rank Minimization Problem.
Suppose that $A,B\in \Mat_{n,m}(\F)$ and we would like to minimize
$\rank(A+xB)$ for $x\in \F$. Let $r=\rank B$. We can write
$$
B=\sum_{i=1}^r v_iw_i^t
$$
where $v_1,\dots,v_r\in \F^n$ and $w_1,\dots,w_r\in \F^m$.
We introduce new variables:
$$
\begin{array}{ll}
x_{i}, & 1\leq i\leq r\\
y_{i,j}, & 1\leq i,j\leq r\mbox{ and $i\neq j$}\\
z_{i,j}, & 1\leq i<j\leq r.
\end{array}
$$
If $i>j$, then we define $z_{i,j}=z_{j,i}$. We will not have such a convention for the $y$'s, so $y_{i,j}$ and $y_{j,i}$ are distinct variables.
Define
\begin{eqnarray*}
{\bf x} & = & (x_1,\dots,x_{r})\\
{\bf y} & = & (y_{i,j}\mid i\neq j, 1\leq i,j\leq r)\\
{\bf z} & = & (z_{i,j}\mid 1\leq i<j\leq r).
\end{eqnarray*}

For $1\leq i<j\leq r$ define the $n\times 2$ matrices
$$
C_{i,j}({\bf x},{\bf y}):=\begin{pmatrix} (y_{i,j}-x_{i})v_{i} & (y_{j,i}-x_{j})v_j
\end{pmatrix}
$$
and the $1\times 2$ matrices
$$
D_{i,j}({\bf y},{\bf z}):=\begin{pmatrix} y_{i,j}-z_{i,j} & y_{j,i}-z_{j,i} \end{pmatrix}.
$$
Also, define an $n\times m$ matrix
$$
E({\bf x}):=A+\sum_{i=1}^r x_i v_iw_i^t.
$$
Let $J$ be the matrix
$$
J({\bf x},{\bf y},{\bf z}):=\left(\begin{array}{c|cccc}
E({\bf x}) & C_{1,2}({\bf x},{\bf y})& C_{1,3}({\bf x},{\bf y}) & \cdots & C_{r-1,r}({\bf x},{\bf y})\\ \hline
0 & D_{1,2}({\bf y},{\bf z}) & 0 &\cdots  &0\\
0 & 0 & D_{1,3}({\bf y},{\bf z}) & & 0\\
\vdots  & \vdots & & \ddots & \vdots\\
0 &  0 & 0 & \cdots & D_{r-1,r}({\bf y},{\bf z})
\end{array}\right)
$$
The size of $J$ is 
$(n+\textstyle\frac{1}{2}r(r-1))\times (m+r(r-1))$.

We will use the convention  $C_{i,j}:=C_{j,i}$ and $D_{i,j}:=D_{j,i}$ if $i>j$.

\begin{theorem}
For every ${\bf x}=(x_1,\dots,x_r)\in \F^r$, ${\bf y}\in \F^{r(r-1)},{\bf z}\in \F^{r(r-1)/2}$ one can choose $x\in \{x_1,\dots,x_r\}$ such that 
$$
\rank J({\bf x},{\bf y},{\bf z})\geq \rank(A+xB).
$$
In particular,
$$
\min_{{\bf x},{\bf y},{\bf z}} \rank J({\bf x},{\bf y},{\bf z})=\min_{x}\rank(A+xB).
$$

\end{theorem}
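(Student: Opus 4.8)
The plan is to prove both inequalities in the displayed equality, the interesting one being $\min_{\mathbf{x},\mathbf{y},\mathbf{z}}\rank J\ge\min_x\rank(A+xB)$, which follows from the first assertion. The easy half, $\min_{\mathbf{x},\mathbf{y},\mathbf{z}}\rank J\le\min_x\rank(A+xB)$, I would dispatch first: given $x\in\F$, set $x_i=x$ for all $i$ and $y_{i,j}=z_{i,j}=x$ for all indices; then $E(\mathbf{x})=A+xB$, every $C_{i,j}$ and every $D_{i,j}$ vanishes, $J$ becomes block-diagonal with blocks $A+xB$ and $0$, and $\rank J=\rank(A+xB)$. For the first assertion, fix $\mathbf{x},\mathbf{y},\mathbf{z}$, abbreviate $E:=E(\mathbf{x})$, and note that since $A+x_kB=E+\sum_{i\ne k}(x_k-x_i)v_iw_i^t$, every column of $A+x_kB$ lies in $\operatorname{col}(E)+\operatorname{span}\{v_i:x_i\ne x_k\}$, so $\rank(A+x_kB)\le\dim\bigl(\operatorname{col}(E)+\operatorname{span}\{v_i:x_i\ne x_k\}\bigr)$. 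It thus suffices to exhibit $k$ for which $\rank J$ dominates this right-hand side.

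Next I would lower-bound $\rank J$ by the rank of a submatrix $S$. Let $d$ be the number of pairs $\{i,j\}$ with $i<j$ and $D_{i,j}\ne0$, and for each such pair designate as \emph{pivot column} whichever of the two columns of $C_{i,j}$ has a nonzero entry in the $D_{i,j}$-row. Let $S$ consist of all $E$-rows together with the $D_{i,j}$-rows of the $d$ pairs with $D_{i,j}\ne0$, and of all of the first $m$ columns, together with both columns of $C_{i,j}$ for every pair with $D_{i,j}=0$ and the pivot column for every pair with $D_{i,j}\ne0$. By the block structure of $J$, the kept $D_{i,j}$-rows meet the pivot columns in a diagonal invertible $d\times d$ block and vanish on all other columns of $S$, so $\rank S=d+\dim(\operatorname{col}(E)+L')$, where $L'$ is the span of the columns $(z_{i,j}-x_i)v_i$ and $(z_{i,j}-x_j)v_j$ (equal to $(y_{i,j}-x_i)v_i$ and $(y_{j,i}-x_j)v_j$, since $D_{i,j}=0$ forces $y_{i,j}=y_{j,i}=z_{i,j}$) over all pairs with $D_{i,j}=0$. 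Hence $\rank J\ge d+\dim(\operatorname{col}(E)+L')$.

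The combinatorial heart comes last. Call $i$ \emph{missing} if $v_i\notin\operatorname{col}(E)+L'$. If $i$ is missing, then for every $j$ with $D_{i,j}=0$ one must have $z_{i,j}=x_i$, since otherwise $(z_{i,j}-x_i)v_i$ would be a nonzero multiple of $v_i$ lying in $L'$. Consequently, if $i$ and $i'$ are both missing and $x_i\ne x_{i'}$, then $D_{i,i'}\ne0$: were it zero we would need $z_{i,i'}=x_i$ and $z_{i',i}=x_{i'}$, impossible because the convention gives $z_{i,i'}=z_{i',i}$ while $x_i\ne x_{i'}$ — this is the one place the symmetry of the $z$'s enters. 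Grouping the missing indices by the value of $x_i$ into classes of sizes $m_1\ge m_2\ge\cdots$, the number of bichromatic pairs of missing indices is $\sum_{a<b}m_am_b\ge m_1(m_2+m_3+\cdots)\ge m_2+m_3+\cdots$, and each such pair contributes to $d$, so $d\ge m_2+m_3+\cdots$. Now choose $k$ to be a missing index whose $x$-value lies in a largest class (any $k$ if no index is missing). The indices $i$ with $x_i\ne x_k$ split into the missing ones lying in the smaller classes, which number $m_2+m_3+\cdots$, and non-missing ones, whose $v_i$ already lie in $\operatorname{col}(E)+L'$; hence $\dim(\operatorname{col}(E)+\operatorname{span}\{v_i:x_i\ne x_k\})\le\dim(\operatorname{col}(E)+L')+(m_2+m_3+\cdots)\le\dim(\operatorname{col}(E)+L')+d\le\rank J$. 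With the first step this gives $\rank J\ge\rank(A+x_kB)$, proving the first assertion; since $\rank(A+x_kB)\ge\min_x\rank(A+xB)$ for every $\mathbf{x},\mathbf{y},\mathbf{z}$, we get $\min_{\mathbf{x},\mathbf{y},\mathbf{z}}\rank J\ge\min_x\rank(A+xB)$, and together with the easy half the equality follows.

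I expect the fiddliest step to be the submatrix computation $\rank S=d+\dim(\operatorname{col}(E)+L')$: one has to verify carefully that the kept $D$-rows really do decouple (each with its unique nonzero pivot entry, zero on every other column of $S$) and that the remaining columns of $S$ restrict in the $E$-rows precisely to $E$ and to the claimed multiples of the $v_i$. The combinatorial inequality $\sum_{a<b}m_am_b\ge m_2+m_3+\cdots$ and the use of the symmetry of the $z_{i,j}$ are the conceptual crux but are short to execute once the setup is in place.
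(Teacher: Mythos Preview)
Your proof is correct and takes a genuinely different route from the paper's. The paper argues by elementary row and column operations: it defines $Z=\{i:\exists j,\ y_{i,j}\neq z_{i,j}\text{ or }x_i\neq y_{i,j}\}$, picks $j\notin Z$ (or any $j$ if $Z$ is everything), sets $x=x_j$, and checks via the symmetry $z_{i,j}=z_{j,i}$ that $x_i\neq x\Rightarrow i\in Z$. For each $i\in Z$ it then produces (possibly after sweeping with the $D$-row) a column in the $C$-block that is a nonzero multiple of $v_i$, and uses those columns to transform $E(\mathbf{x})$ into $A+xB$ inside $J$; the inequality follows because $A+xB$ is now literally a submatrix after invertible operations.

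You instead lower-bound $\rank J$ analytically by the rank of an explicit submatrix $S$, obtaining $\rank J\ge d+\dim(\operatorname{col}(E)+L')$, and then run a separate combinatorial argument on the ``missing'' indices, grouping them by the value of $x_i$ and using $\sum_{a<b}m_am_b\ge m_2+m_3+\cdots$ to absorb the overflow into $d$. The symmetry $z_{i,j}=z_{j,i}$ enters at the same decisive point---forcing $D_{i,i'}\neq 0$ for missing $i,i'$ with $x_i\neq x_{i'}$---but the packaging is different. The paper's argument is shorter and more constructive (it tells you exactly which row and column operations to perform), whereas yours separates the linear algebra from the combinatorics cleanly and makes the role of the auxiliary $D$-rows as ``rank reservoirs'' more explicit; it would also adapt more readily if one wanted quantitative refinements, since the slack $d-(m_2+m_3+\cdots)$ is visible. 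Both the easy half and the submatrix computation $\rank S=d+\dim(\operatorname{col}(E)+L')$ are handled correctly; your block form $\bigl(\begin{smallmatrix}X&Y\\0&P\end{smallmatrix}\bigr)$ with $P$ invertible is exactly what is needed there.
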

\begin{proof}
Let 
$$
Z:=\{i\mid \mbox{ $y_{i,j}\neq z_{i,j}$ or $x_i\neq y_{i,j}$ for some $j$}\}.
$$
Choose a $j\in \{1,\dots,r\}$ such that either $j\not\in Z$ or $Z=\{1,\dots,r\}$.
Define $x:=x_j$. Suppose that $x_i\neq x$. If $Z=\{1,\dots,r\}$ then we have $i\in Z$.
Otherwise, $j\not \in Z$ and $x_i\ne x_j=y_{j,i}=z_{j,i}=z_{i,j}$.
So $x_i\neq y_{i,j}$ or $y_{i,j}\neq z_{i,j}$ and it follows that $i\in Z$. We conclude that $x_i\neq x$ implies $i\in Z$.

If $y_{i,j}\neq x_i$, then $C_{i,j}$  contains column that is nonzero multiple $v_i$.
If $y_{i,j}\neq z_{i,j}$ then we wipe $C_{i,j}$ with $D_{i,j}$ using  elementary row operations
to obtain a matrix $C_{i,j}'$ which has a column that is a nonzero multiple of $v_i$.
So after some elementary row operations, we obtain a matrix
$$
J'=\left(\begin{array}{c|cccc}
E & C_{1,2}'& C_{1,3}' & \cdots & C_{r-1,r}'\\ \hline
0 & D_{1,2} & 0 &\cdots  &0\\
0 & 0 & D_{1,3} & & 0\\
\vdots  & \vdots & & \ddots & \vdots\\
0 &  0 & 0 & \cdots & D_{r-1,r}
\end{array}\right)=\left(\begin{array}{c|c}E & C'\\ \hline  0 & D\end{array}\right)
$$
such that for every $i\in Z$, $C'$ has a column that is a nonzero multiple of the column vector  $v_i$.

Using elementary column operations, we can replace the submatrix
$$
E({\bf x})=E(x_1,\dots,x_r)=A+\sum_{i=1}^r x_iv_iw_i^t
$$
of $J'$ by
$$
E(x,x,\dots,x)=A+x\sum_{i=1}^r v_iw_i^t=A+xB=E({\bf x})+\sum_{i\in Z}(x-x_i)v_iw_i^t.
$$
So after elementary row and column operations, $A+xB$ becomes a submatrix of $J({\bf x},{\bf y},{\bf z})$.
This proves that
$$
\rank J({\bf x},{\bf y},{\bf z})\geq \rank(A+xB).
$$
If we set $x_i=y_{i,j}=z_{i,j}=x$ for all $i,j$, then we get
$$
J=\begin{pmatrix} A+xB & 0\\ 0 & 0\end{pmatrix}.
$$
This proves that
$$
\min_{{\bf x},{\bf y},{\bf z}} \rank J({\bf x},{\bf y},{\bf z})=\min_{x}\rank(A+xB).
$$
\end{proof}
We can write $J$ in the form
$$
J({\bf x},{\bf y},{\bf z})=\begin{pmatrix}
A & 0\\  0 & 0\end{pmatrix}+\sum_{i=1}^r x_i F_i+\sum_{i\neq j} y_{i,j}G_{i,j}+\sum_{i<j} z_{i,j}H_{i,j}.
$$
The crucial point here is that $F_i,G_{i,j},H_{i,j}$ all have rank 1. In fact, $G_{i,j}$ has only one nonzero column,
and $H_{i,j}$  has only one nonzero row.
\begin{example}\label{ex:2}
Suppose that 
$$A=\begin{pmatrix} 0 & 1\\ -1 & 0\end{pmatrix},\quad B=\begin{pmatrix} 1 & 0 \\ 0 & 1\end{pmatrix}.
$$
Let
$$
v_1=w_1=\begin{pmatrix} 1 \\ 0\end{pmatrix},\quad v_2=w_2=\begin{pmatrix} 0 \\ 1\end{pmatrix}
$$
so that
$$
B=v_1w_1^t+v_2w_2^t=\begin{pmatrix} 1\\0\end{pmatrix}\begin{pmatrix}1 & 0\end{pmatrix}+\begin{pmatrix}0\\ 1\end{pmatrix}\begin{pmatrix} 0 & 1\end{pmatrix}.
$$
Define
$$
J:=\left(\begin{array}{c|c}
E & C_{1,2}\\ \hline
0 & D_{1,2}
\end{array}\right)
$$
where
$$
E:=A+x_1v_1w_1^t+x_2v_2w_2^t=\begin{pmatrix} x_1 & 1\\ -1 & x_2\end{pmatrix},
$$
$$
C_{1,2}:=\begin{pmatrix} (y_{1,2}-x_1)v_1 & (y_{2,1}-x_2)v_2\end{pmatrix}=\begin{pmatrix}
y_{1,2}-x_1 & 0\\ 
0 & y_{2,1}-x_2
\end{pmatrix}
$$
and
$$
D_{1,2}:=\begin{pmatrix} y_{1,2}-z_{1,2} & y_{2,1}-z_{1,2}
\end{pmatrix}
$$
So we have
$$
J=\left(\begin{array}{cc|cc}
x_{1} & 1 & y_{1,2}-x_1 & 0\\
-1 & x_{2} & 0 & y_{2,1}-x_2\\ \hline
0 & 0 & y_{1,2}-z_{1,2} & y_{2,1}-z_{1,2}\end{array}\right).
$$
We can write
\begin{multline*}
J=\begin{pmatrix} 0 & 1 & 0 & 0\\
-1 & 0 & 0& 0\\
0 & 0 & 0& 0\end{pmatrix}+x_1\begin{pmatrix} 1  & 0 & -1 & 0\\
0 & 0 & 0 & 0\\
0 & 0 & 0 & 0\end{pmatrix}
+x_2\begin{pmatrix}
 0 & 0 & 0 & 0\\
 0 & 1 & 0 & -1\\
 0 & 0 & 0 & 0
 \end{pmatrix}+
 y_{1,2}\begin{pmatrix}
  0 & 0 & 1 & 0\\
  0 & 0 & 0 & 0\\
  0 & 0 & 1 & 0
  \end{pmatrix}+\\
  +y_{2,1}\begin{pmatrix}
  0 & 0 & 0 & 0\\
  0 & 0 & 0 & 1\\
  0 & 0 & 0 & 1
  \end{pmatrix}
  +
  z_{1,2}\begin{pmatrix}
  0 & 0 & 0 & 0\\
  0 & 0 & 0 &0\\
  0 & 0 & -1 & -1
  \end{pmatrix}.
  \end{multline*}
  \end{example}
\begin{example}
Suppose that
$$
A=\begin{pmatrix}
0& 3 & 4\\
0 & 0 & 5\\
0 & 0 & 0\end{pmatrix}, \quad
B=\begin{pmatrix}
1 & 0 & 0\\
2 & 1 & 0\\
3 & 2 & 1
\end{pmatrix}
$$
We can write
$$
B=\begin{pmatrix}
1 \\
2\\
3\end{pmatrix}\begin{pmatrix} 1 & 0 & 0\end{pmatrix}+
\begin{pmatrix}
0 \\
1\\
2\end{pmatrix}\begin{pmatrix}0 & 1 & 0\end{pmatrix}+\begin{pmatrix} 0\\0\\1\end{pmatrix}\begin{pmatrix}0 & 0 & 1\end{pmatrix}
$$
We have
$$
E=\begin{pmatrix}
x_1 & 3 & 4\\
2x_1 & x_2 & 5\\
3x_1 & 2x_2 & x_3
\end{pmatrix},
$$
\begin{multline*}
C=\begin{pmatrix}
C_{1,2} & C_{1,3} & C_{2,3}\end{pmatrix}=\\=\begin{pmatrix}
y_{1,2}-x_1 & 0 & y_{1,3}-x_1 & 0 & 0 & 0\\
2y_{1,2}-2x_1& y_{2,1}-x_2 & 2y_{1,3}-2x_1 & 0 & y_{2,3}-x_2 & 0\\
3y_{1,2}-3x_1 &  2y_{2,1}-2x_2 & 3y_{1,3}-3x_1 & y_{3,1}-x_3 & 2y_{2,3}-2x_2 & y_{3,2}-x_3
\end{pmatrix}
\end{multline*}
and
\begin{multline*}
D=\begin{pmatrix}
D_{1,2} & 0 & 0\\
0 & D_{1,3} & 0\\
0 & 0 & D_{2,3}\end{pmatrix}=\\
=\begin{pmatrix}
y_{1,2}-z_{1,2} & y_{2,1}-z_{1,2} & 0 & 0 & 0 &0\\
0 & 0 & y_{1,3}-z_{1,3} & y_{3,1}-z_{1,3} & 0 & 0\\
0 & 0 & 0 & 0 & y_{2,3}-z_{2,3} & y_{3,2}- z_{2,3}
\end{pmatrix}
\end{multline*}
So we have
$$
J=\left(
\begin{array}{ccc|cccccc}
x_1 & 3 & 4 & y_{1,2}-x_1 & 0 & y_{1,3}-x_1 & 0 & 0 & 0\\
2x_1 & x_2 & 5 & 2y_{1,2}-2x_1& y_{2,1}-x_2 & 2y_{1,3}-2x_1 & 0 & y_{2,3}-x_2 & 0\\
3x_1 & 2x_2 & x_3 & 3y_{1,2}-3x_1 &  2y_{2,1}-2x_2 & 3y_{1,3}-3x_1 & y_{3,1}-x_3 & 2y_{2,3}-2x_2 & y_{3,2}-x_3\\ \hline
0 & 0 & 0 &y_{1,2}-z_{1,2} & y_{2,1}-z_{1,2} & 0 & 0 & 0 &0\\
0 & 0 & 0 & 0 & 0 & y_{1,3}-z_{1,3} & y_{3,1}-z_{1,3} & 0 & 0\\
0 & 0 & 0 &0 & 0 & 0 & 0 & y_{2,3}-z_{2,3} & y_{3,2}- z_{2,3}
\end{array}\right)
$$
\end{example}

We will formulate the general rank minimization problem with upper indices, so that we can reserve lower indices for other uses. 
Suppose $A,B^{(1)},\dots,B^{(s)}\in \Mat_{m,n}(\F)$ and we 
want to minimize
$$
\rank(A+x^{(1)}B^{(1)}+\cdots+x^{(s)}B^{(s)}).
$$
over  all $x^{(1)},\dots,x^{(s)}\in \F$.
Inductively we construct a matrices by
$$
J_0(A)=A.
$$
and
\begin{multline*}
J_i(A,B^{(1)},\dots,B^{(i)},{\bf x}^{(1)},\dots,{\bf x}^{(i)}, {\bf y}^{(1)},\dots,{\bf y}^{(i)},{\bf z}^{(1)},\dots,{\bf z}^{(i)}):=\\
J(J_{i-1}(A,B^{(1)},\dots,B^{(i-1)},{\bf x}^{(1)},\dots,{\bf x}^{(i-1)}, {\bf y}^{(1)},\dots,{\bf y}^{(i-1)},{\bf z}^{(1)},\dots,{\bf z}^{(i-1)}), B^{(i)}, {\bf x}^{(i)},{\bf y}^{(i)},{\bf z}^{(i)}).
\end{multline*}
for $i=1,2,\dots,s$.
Inductively, we can find $x^{(s)},x^{(s-1)},\dots,x^{(1)}$ (in that order) such that
\begin{multline*}
\rank J_i(A+x^{(i+1)}B^{(i+1)}+\cdots+x^{(s)}B^{(s)},B^{(1)},\dots,B^{(i)},{\bf x}^{(1)},\dots,{\bf x}^{(i)}, {\bf y}^{(1)},\dots,{\bf y}^{(i)},{\bf z}^{(1)},\dots,{\bf z}^{(i)})\geq \\ \geq
\rank J_{i-1}(A+x^{(i)}B^{(i)}+\cdots+x^{(s)}B^{(s)}, {\bf x}^{(1)},\dots,{\bf x}^{(i-1)}, {\bf y}^{(1)},\dots,{\bf y}^{(i-1)},{\bf z}^{(1)},\dots,{\bf z}^{(i-1)}).
\end{multline*}
for all $i$.
Combining all the inequalities together gives
\begin{multline*}
\rank J_s(A,B^{(1)},\dots,B^{(s)},{\bf x}^{(1)},\dots,{\bf x}^{(s)}, {\bf y}^{(1)},\dots,{\bf y}^{(s)},{\bf z}^{(1)},\dots,{\bf z}^{(s)})\geq\\
\rank J_0(A+x^{(1)}B^{(1)}+\cdots+x^{(s)}B^{(s)})=\rank(A+x^{(1)}B^{(1)}+\cdots+x^{(s)}B^{(s)}).
\end{multline*}
If we set all the entries of ${\bf x}^{(i)},{\bf y}^{(i)},{\bf z}^{(i)}$ equal to $x^{(i)}$ for all $i$, then we have equality.
\begin{example}
Suppose that
$$
A=\begin{pmatrix}
-1 & 0 & 0\\
0 & -1 & 0\\
0 & 0 & -1
\end{pmatrix},
B^{(1)}=\begin{pmatrix}
0 & 1 & 0\\
-1 & 0 & 0\\
0 & 0 & 0
\end{pmatrix},
B^{(2)}=\begin{pmatrix}
0 & 0 & 1\\
0 & 0 & 0\\
-1 & 0 & 0
\end{pmatrix},
B^{(3)}=\begin{pmatrix}
0 & 0 & 0\\
0 & 0 & 1\\
0 & -1 & 0
\end{pmatrix}.
$$
so that
$$
A+x^{(1)}B^{(1)}+x^{(2)}B^{(2)}+x^{(3)}B^{(3)}=
\begin{pmatrix}
-1 & x^{(1)} & x^{(2)}\\
-x^{(1)} & -1 & x^{(3)}\\
-x^{(2)} & - x^{(3)} & -1.
\end{pmatrix}.
$$
We write
\begin{eqnarray*}
B^{(1)}&=&\begin{pmatrix} 1 \\ 0 \\0\end{pmatrix}\begin{pmatrix}0 & 1 & 0\end{pmatrix}+
\begin{pmatrix} 0\\ 1 \\ 0\end{pmatrix}\begin{pmatrix}-1 & 0 & 0\end{pmatrix},\\
B^{(2)}&=&\begin{pmatrix} 1\\ 0 \\0\end{pmatrix}\begin{pmatrix}0 & 0 & 1\end{pmatrix}+
\begin{pmatrix} 0\\ 0 \\ 1\end{pmatrix}\begin{pmatrix}-1 & 0 & 0\end{pmatrix},\\
B^{(3)}&=&\begin{pmatrix} 0\\ 1 \\0\end{pmatrix}\begin{pmatrix}0 & 0 & 1\end{pmatrix}+
\begin{pmatrix} 0\\ 0 \\ 1\end{pmatrix}\begin{pmatrix}0 & -1 & 0\end{pmatrix}.
\end{eqnarray*}
We get
$$
J_3=\left(\begin{array}{ccc|cc|cc|cc}
-1 & x^{(1)}_1 & x^{(2)}_1 & y_{1,2}^{(1)}-x^{(1)}_1 & 0 & y_{1,2}^{(2)}-x_1^{(2)} & 0 & 0 & 0\\
-x^{(1)}_2 & -1 & x^{(3)}_1 & 0 & y_{2,1}^{(1)}-x^{(1)}_2 &0 & 0& y_{1,2}^{(3)}-x_1^{(3)} & 0\\
-x^{(2)}_2 & -x^{(3)}_2 & -1 & 0 & 0 & 0 & y_{2,1}^{(2)}-x_2^{(2)} & 0 & y_{2,1}^{(3)}-x_2^{(3)}\\ \hline
0 & 0 & 0 & y_{1,2}^{(1)}-z_{1,2}^{(1)} & y_{2,1}^{(1)}-z_{1,2}^{(1)} & 0 & 0 & 0 & 0\\ \hline
0 & 0 & 0 & 0 & 0 & y_{1,2}^{(2)}-z_{1,2}^{(2)} & y_{2,1}^{(2)}- z_{1,2}^{(2)} & 0 & 0\\ \hline
0 & 0 & 0 & 0 & 0 & 0 & 0 & y_{1,2}^{(3)}-z_{1,2}^{(3)} & y_{2,1}^{(3)}-z_{1,2}^{(3)}
\end{array}\right).
$$
\end{example}
\section{Reduction from {\bf 1--RM} to {\bf LRMC}}\label{1RMtoLRMC}
We first deal with the Special Rank Minimization Problem with $s=1$. Suppose that $A,B$ are $n\times m$ matrices,
and that $B$ has rank $1$. We can write $B=vw^t$ with $v\in \F^n$ and $w\in \F^m$.

From 
$$
\begin{pmatrix} A+xvw^t & 0\\w^t & -1\end{pmatrix}=\begin{pmatrix} I & xv\\ 0 & 1\end{pmatrix}\begin{pmatrix} A & xv\\w^t & -1\end{pmatrix}
$$
follows that
\begin{multline*}
\rank(A+xB)+1=\rank(A+xvw^t)+1=
\rank\begin{pmatrix} A+xvw^t & 0\\ w^t & -1\end{pmatrix}=\\=
\rank\begin{pmatrix} A & xv\\ w^t & -1\end{pmatrix}=\rank(A'+xB'),
\end{multline*}
where
$$
A'=\begin{pmatrix} A& 0\\ w^t & -1\end{pmatrix},\quad B'=\begin{pmatrix} 0 & v\\0 & 0\end{pmatrix}.
$$

Note that $B'$ has at most 1 nonzero column.
If $B$ has at most 1 nonzero row, then $v$ will have only one nonzero entry and $B'$ will only have one nonzero entry. In that case we have
reduced to the case where $B$ has only one nonzero entry. So we have reduced a special rank minimization  problem to a matrix completion problem in this case.
\begin{example}
Suppose that 
$$
A=\begin{pmatrix} 1 & 0 \\ 0 & 1 \end{pmatrix},\quad B=\begin{pmatrix} 0 & 0 \\ 2 & 3\end{pmatrix}.
$$
We have
$$
A'=\begin{pmatrix} 1 & 0\ & 0 \\ 0 & 1 & 0\\
2 & 3 & -1
\end{pmatrix},\quad B'=\begin{pmatrix} 0 & 0 & 0\\
0 & 0 & 1\\
0 & 0 & 0
\end{pmatrix}
$$
So we have reduced the problem of minimizing the rank of 
$$
A+xB=\begin{pmatrix} 1 & 0 \\ 2x & 1+3x\end{pmatrix}
$$
to the matrix completion problem
$$
\begin{pmatrix}
1 & 0 & 0\\
0 & 1 & ?\\
2 & 3 & -1
\end{pmatrix}.
$$
The minimal rank of $A+xB$ is $1$, namely for $x=-\frac{1}{3}$. The smallest rank for the matrix completion problem is $2$. This is the case
when the missing entry is $-\frac{1}{3}$.
\end{example}

Suppose again that $A,B\in \Mat_{n,m}(\F)$ and $B=vw^t$ has rank $1$. 
By symmetry, we also have
\begin{multline*}
\rank(A+xB)+1=\rank(A+xvw^t)+1=
\rank\begin{pmatrix} A+xvw^t & v\\ 0 & -1\end{pmatrix}=\\=
\rank\begin{pmatrix} A & v\\ xw^t & -1\end{pmatrix}=\rank(A''+xB''),
\end{multline*}
where
$$
A''=\begin{pmatrix} A& v\\ 0 & -1\end{pmatrix},\quad B'=\begin{pmatrix} 0 & 0\\w^t & 0\end{pmatrix}.
$$

Note that $B'$ has at most 1 nonzero row.
If $B$ has at most 1 nonzero column, then $w^t$ will have only one nonzero entry and $B''$ will only have one nonzero entry. In that case we have
reduced the special matrix minimization problem to a matrix completion problem.

In the general case, where $B=vw^t$ has more than 1 nonzero row and more than 1 nonzero column, we can use the first construction to obtain matrices $A',B'$
such that 
$$
\rank(A+xB)+1=\rank(A'+xB')
$$
and $B'$ has at most one nonzero column. Then we can use the second contruction to obtain matrices $A'',B''$ such that
$$
\rank(A'+xB')+1=\rank(A''+xB'')
$$
such that $B''$ has only one nonzero entry. Note that
$$
\rank(A+xB)+2=\rank(A''+xB'').
$$
So we have reduced the problem of minimizing the rank of $A+xB$ to the matrix completion problem.
\begin{example}
Suppose that
$$
A=\begin{pmatrix}
1 & 0\\
0 & 1
\end{pmatrix},\quad B=\begin{pmatrix} 1 & 2\\ 3 & 6\end{pmatrix}=\begin{pmatrix} 1\\3\end{pmatrix}\begin{pmatrix}1 & 2\end{pmatrix}.
$$
If we define
$$
A'=\begin{pmatrix}
1 & 0 & 0 \\
0 & 1 & 0\\
1 & 2 & -1
\end{pmatrix},\quad B'=\begin{pmatrix}
0 & 0 & 1\\
0 & 0 & 3\\
0 & 0 & 0
\end{pmatrix}=\begin{pmatrix}1\\3\\0\end{pmatrix}\begin{pmatrix}0 & 0 & 1\end{pmatrix}
$$
then
$$
\rank(A+xB)+1=\rank(A'+xB').
$$
Now, define
$$
A''=\begin{pmatrix}
1 & 0 & 0 & 1\\
0 & 1 & 0 & 3\\
1 & 2 & -1 & 0\\
0 & 0 & 0 & -1
\end{pmatrix},\quad
B''=\begin{pmatrix}
0 & 0 & 0 & 0\\
0 & 0 & 0 & 0\\
0 & 0 & 0 & 0\\
0 & 0 & 1 & 0
\end{pmatrix}.
$$
Then we have
$$
\rank(A+xB)+2=\rank(A''+xB'').
$$
So we have reduced the problem of minimizing the rank of $A+xB$ to the low rank matrix completion problem
$$
\begin{pmatrix}
1 & 0 & 0 & 1\\
0 & 1 & 0 & 3\\
1 & 2 & -1 & 0\\
0 & 0 & ? & -1
\end{pmatrix}.
$$
The smallest possible rank of this matrix is $3$, when the missing entry is $-\frac{1}{7}$.
\end{example}

Suppose that $A\in \Mat_{n,m}(\F)$ and that $B_1,\dots,B_s\in\Mat_{n,m}(\F)$ have rank $1$.
By repeated use of the construction above, we can reduce the problem of minimizing 
$$
\rank(A+x_1B_1+\cdots+x_sB_s) 
$$
to the Low Rank Matrix Completion Problem.
\begin{example}
Consider again Example~\ref{ex:2}. We defined
$$
A=\begin{pmatrix}
0 & 1\\
-1 & 0
\end{pmatrix},\quad B=\begin{pmatrix} 1 & 0\\
0 & 1\end{pmatrix}
$$
and
we reduced the problem of minimizing
$$
\rank(A+xB)
$$
to the problem of minimizing the rank of 
\begin{multline*}
J=\begin{pmatrix} 0 & 1 & 0 & 0\\
-1 & 0 & 0& 0\\
0 & 0 & 0& 0\end{pmatrix}+x_1\begin{pmatrix} 1  & 0 & -1 & 0\\
0 & 0 & 0 & 0\\
0 & 0 & 0 & 0\end{pmatrix}
+x_2\begin{pmatrix}
 0 & 0 & 0 & 0\\
 0 & 1 & 0 & -1\\
 0 & 0 & 0 & 0
 \end{pmatrix}+
 y_{1,2}\begin{pmatrix}
  0 & 0 & 1 & 0\\
  0 & 0 & 0 & 0\\
  0 & 0 & 1 & 0
  \end{pmatrix}+\\
  +y_{2,1}\begin{pmatrix}
  0 & 0 & 0 & 0\\
  0 & 0 & 0 & 1\\
  0 & 0 & 0 & 1
  \end{pmatrix}
  +
  z_{1,2}\begin{pmatrix}
  0 & 0 & 0 & 0\\
  0 & 0 & 0 &0\\
  0 & 0 & -1 & -1
  \end{pmatrix}.
  \end{multline*}
This can be reduced to the Low Rank Matrix Completion Problem:
$$
\left(\begin{array}{cccc|c|c|c|c|c}
0 & 1 & 0 & 0 & ? & 0 & 1 & 0 & 0\\
-1 & 0 & 0 & 0 & 0 & ? & 0 & 1 & 0\\
0 & 0 & 0 & 0 & 0 & 0 & 1 & 1 & ?\\ \hline
1 & 0 & -1 & 0 & -1 & 0 & 0 & 0 & 0\\ \hline
0 & 1 & 0 & -1 & 0 & -1 & 0 & 0 & 0\\ \hline
0 & 0 & ? & 0 & 0 & 0 & -1 & 0 & 0\\ \hline
0 & 0 & 0 & ? & 0 & 0 & 0 & -1 & 0\\ \hline
0 & 0 & -1 & -1 & 0 & 0 & 0 & 0 & -1
\end{array}\right).
$$
Over the field $\R$, the smallest possible rank of this matrix is $2+5=7$, for example when all the missing entries are 0. But over $\C$, the minimal rank 
of this matrix is $1+5=6$. This is the case when all the missing entries are equal to the imaginary number $i$.

\end{example}


\section{Reduction from {\bf 1--RM}  to {\bf 3-TR}}\label{1RMto3TR}
Suppose that $A=(a_{i,j})\in \Mat_{n,m}(\F)$  and that $B_1,\dots,B_s\in \Mat_{n,m}(\F)$ have rank $1$ and are linearly independent.
We can write $B_i=v_iw_i^t$ where $v_i\in \F^n$ and $w_i\in \F^m$ for all $i$.
We will identify $\Mat_{n,m}(\F)$ with $\F^n\otimes \F^m$. Then $B_i$ will be identified with $v_i\otimes w_i\in \F^n\otimes \F^m$.
We define a third order tensor $T\in \F^n\otimes \F^m\otimes \F^{s+1}$
by
\begin{equation}\label{eq:Ahat}
T=A\otimes e_{s+1}+\sum_{k=1}^s B_k\otimes e_k=
\sum_{i=1}^n\sum_{j=1}^m a_{i,j} (e_i\otimes e_j\otimes e_{s+1})+\sum_{k=1}^s v_k\otimes w_k\otimes e_k.
\end{equation}
Let $l=\rank T$. We can write
$$
T=\sum_{j=1}^l a_j\otimes b_j\otimes c_j.
$$
\begin{lemma}
The vectors $c_1,\dots,c_l,e_{s+1}$ span $\F^{s+1}$.
\end{lemma}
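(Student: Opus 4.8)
The plan is to argue by duality: a finite collection of vectors spans $\F^{s+1}$ precisely when no nonzero linear functional annihilates all of them. So I would take an arbitrary $\phi\in(\F^{s+1})^{*}$ with $\phi(c_j)=0$ for $j=1,\dots,l$ and $\phi(e_{s+1})=0$, and show $\phi=0$; this forces the annihilator of $\operatorname{span}(c_1,\dots,c_l,e_{s+1})$ to be trivial, hence that span is all of $\F^{s+1}$.

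The tool that makes this work is the partial contraction in the third tensor factor, $\id\otimes\id\otimes\phi\colon \F^{n}\otimes\F^{m}\otimes\F^{s+1}\to\F^{n}\otimes\F^{m}$, which is linear and therefore yields the same element of $\F^{n}\otimes\F^{m}$ no matter which presentation of $T$ we feed it. Applying it to $T=\sum_{j=1}^{l}a_j\otimes b_j\otimes c_j$ gives $\sum_{j=1}^{l}\phi(c_j)\,a_j\otimes b_j=0$ since every $\phi(c_j)=0$. Applying it to $T=A\otimes e_{s+1}+\sum_{k=1}^{s}B_k\otimes e_k$ gives $\phi(e_{s+1})A+\sum_{k=1}^{s}\phi(e_k)B_k=\sum_{k=1}^{s}\phi(e_k)B_k$, where I used $\phi(e_{s+1})=0$. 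Equating the two outputs yields $\sum_{k=1}^{s}\phi(e_k)B_k=0$.

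Now the hypothesis that $B_1,\dots,B_s$ are linearly independent forces $\phi(e_k)=0$ for all $k=1,\dots,s$. Together with $\phi(e_{s+1})=0$ and the fact that $e_1,\dots,e_{s+1}$ is a basis of $\F^{s+1}$, this gives $\phi=0$, completing the argument. There is no substantive obstacle here: the only points that need a moment's care are that the span/annihilator duality is valid over an arbitrary field for finite-dimensional spaces, and that the partial contraction really does agree on the two descriptions of $T$ — which is immediate because those descriptions are equal as tensors and contraction is a well-defined linear map. (One could avoid functionals altogether by choosing a complement of the span and projecting $T$ onto it, but the functional version is the cleanest.)
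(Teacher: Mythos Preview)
Your proof is correct. The paper takes a slightly different, though closely related, route: rather than contracting in the third tensor factor with a functional $\phi$ that annihilates all the $c_j$ and $e_{s+1}$, it contracts in the first two factors. For each $i\le s$ the paper picks (using the linear independence of the $B_k$) a functional $h_i:\F^n\otimes\F^m\to\F$ with $h_i(B_j)=\delta_{ij}$, applies $h_i\otimes\id$ to both presentations of $T$, and reads off $e_i+h_i(A)\,e_{s+1}=\sum_j h_i(a_j\otimes b_j)\,c_j$, so each $e_i$ visibly lies in $\operatorname{span}(c_1,\dots,c_l,e_{s+1})$. Your argument is essentially the dual of this, using the annihilator characterization of spanning sets instead. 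Both rely on exactly the same ingredients --- the linear independence of the $B_k$ and comparing two evaluations of $T$ --- so neither gains much over the other; the paper's version is marginally more constructive (it exhibits each $e_i$ as an explicit combination), while yours avoids having to choose the dual functionals $h_i$ individually.
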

\begin{proof}
For every $i$, choose a linear function $h_i:\F^n\otimes \F^m\to \F$ such that $h_i(B_i)=1$
and $h_i(B_j)=0$ if $j\neq i$.
Applying $h_i\otimes \id:\F^n\otimes \F^m\otimes \F^{s+1}\to \F^{s+1}$ to the tensor $T$ gives
$$
\sum_{j=1}^l h_i(a_j\otimes b_j) c_j=(h_i\otimes \id)(T)=e_i+h_i(A) e_{s+1}.
$$
This shows that $e_i$ lies in the span of $c_1,\dots,c_l,e_{s+1}$ for all $i$. So $c_1,\dots,c_l,e_{s+1}$
span the vector space $\F^{s+1}$.
\end{proof}
After rearranging $c_1,\dots,c_l$, we may assume that $c_1,c_2,\dots,c_{s},e_{s+1}$ is a basis of $\F^{s+1}$.
There exists a linear function $f:\F^{s+1}\to \F$ such that $f(c_1)=\cdots=f(c_s)=0$ and $f(e_{s+1})=1$.
We apply $\id\otimes \id\otimes f:\F^n\otimes \F^m\otimes \F^{s+1}\to \F^n\otimes \F^m$ to $T$:
$$
A+\sum_{k=1}^s f(e_k)B_k=(\id\otimes\id\otimes f)(T)=\sum_{j=1}^l f(c_j) a_j\otimes b_j=\sum_{j=s+1}^l f(c_j)a_j\otimes b_j.
$$
The resulting matrix clearly has rank at most $l-s$.
\begin{theorem}
The smallest possible rank of
$$
A+x_1B_1+\cdots+x_sB_s
$$
over all $(x_1,\dots,x_s)\in \F^s$ is $l-s$. This minimum is attained when $x_i=f(e_i)$ for $i=1,2,\dots,s$.
\end{theorem}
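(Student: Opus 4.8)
The plan is to prove the two inequalities
$$
\min_{(x_1,\dots,x_s)\in\F^s}\rank(A+x_1B_1+\cdots+x_sB_s)\le l-s
\quad\text{and}\quad
\rank(A+x_1B_1+\cdots+x_sB_s)\ge l-s\ \text{ for all }(x_1,\dots,x_s),
$$
which together force the minimum to equal $l-s$; the value of $f$ at the $e_i$ then exhibits a point where the minimum is attained. The first inequality is already contained in the computation preceding the statement: applying $\id\otimes\id\otimes f$ to $T$ gives $A+\sum_{k=1}^s f(e_k)B_k=\sum_{j=s+1}^{l}f(c_j)\,a_j\otimes b_j$, a sum of $l-s$ pure tensors (some possibly zero), hence a matrix of rank at most $l-s$. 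So the minimum is at most $l-s$, attained at $x_i=f(e_i)$.

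The substance of the proof is the lower bound. Fix $(x_1,\dots,x_s)\in\F^s$, put $M=A+\sum_{k=1}^s x_kB_k$ and $\rho=\rank M$. Substituting $A=M-\sum_{k=1}^s x_kB_k$ into the defining expression for $T$ yields the identity
$$
T=M\otimes e_{s+1}+\sum_{k=1}^s B_k\otimes(e_k-x_k e_{s+1}).
$$
Writing $M=\sum_{i=1}^{\rho}p_i\otimes q_i$ with $p_i\in\F^n$, $q_i\in\F^m$ (possible since $\rank M=\rho$) and recalling that each $B_k=v_k\otimes w_k$ is a pure tensor, we obtain
$$
T=\sum_{i=1}^{\rho}p_i\otimes q_i\otimes e_{s+1}+\sum_{k=1}^{s}v_k\otimes w_k\otimes(e_k-x_k e_{s+1}),
$$
which exhibits $T$ as a sum of $\rho+s$ pure tensors. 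Hence $l=\rank T\le\rho+s$, i.e. $\rho\ge l-s$, which is the desired lower bound.

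I do not expect a real obstacle here: the only points to be careful about are the sign bookkeeping in the substitution identity and the remark that the lower bound uses nothing beyond "$B_k$ has rank $1$", whereas the linear independence of $B_1,\dots,B_s$ and the spanning property from the Lemma enter only through the existence of the functional $f$ that gives the matching upper bound.
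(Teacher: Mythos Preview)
Your proof is correct and follows essentially the same approach as the paper: the upper bound is read off from the computation with the functional $f$ preceding the theorem, and the lower bound is obtained by rewriting $T=(A+\sum_k x_kB_k)\otimes e_{s+1}+\sum_k B_k\otimes(e_k-x_ke_{s+1})$ and decomposing the first factor into rank-one summands to exhibit $T$ as a sum of $\rho+s$ pure tensors.
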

\begin{proof}
Suppose that $x_1,\dots,x_s\in \F$ and $\rank(A+x_1B_1+\cdots+x_sB_s)=r$. We have to show that $r\geq l-s$.
We can write
$$
A+x_1B_1+\dots+x_sB_s=C_1+\cdots+C_r,
$$
where $C_1,\dots,C_r$ are matrices of rank $1$.
We have 
\begin{multline*}
T=A\otimes e_{s+1}+\sum_{k=1}^s B_k\otimes e_k=
(A+\sum_{k=1}^s x_k B_k)\otimes e_{s+1}-\sum_{k=1}^s x_k B_k\otimes e_{s+1}+\sum_{k=1}^s B_k\otimes e_k=\\=
\sum_{j=1}^r C_j\otimes e_{s+1}+\sum_{k=1}^s B_k\otimes (e_k -x_k e_{s+1})
\end{multline*}
We have written $T$ as a sum of $r+s$ pure tensors. So $l=\rank T\leq r+s$.

\end{proof}

\begin{remark}
The reduction in this section also easily generalizes to the higher order analog of matrix completion: {\em tensor completion}. 
Let $V=\F^{n_1}\otimes \cdots\otimes \F^{n_d}$ and suppose that $A\in V$, and $B_1,\dots,B_s$ are pure  tensors that are linearly dependent.
Define
$$
T=A\otimes e_{s+1}+\sum_{k=1}^s B_k\otimes e_k\in V\otimes \F^{s+1}=\F^{n_1}\otimes \cdots\otimes \F^{n_d}\otimes \F^{s+1}.
$$
Suppose that $T$ has rank $l$. Then we can write
$$
T=\sum_{j=1}^l D_j\otimes c_j
$$
where $D_1,\dots,D_l\in V$ are pure tensors. The vectors $c_1,\dots,c_l,e_{s+1}$ span $\F^{s+1}$.
After rearranging, we may assume that $c_1,\dots,c_s,e_{s+1}$ is a basis. Define a linear map $f:\F^{s+1}\to \F$ by $f(c_1)=\cdots=f(c_s)=0$ and $f(e_{s+1})=1$.
Now the smallest possible rank of the tensor
$$
A+\sum_{i=1}^s x_iB_i
$$
over all $x_1,\dots,x_s\in \F$ is $l-s$. Equality is attained when we take $x_i=f(e_i)$ for $i=1,2,\dots,s$, and we have a decomposition
$$
A+\sum_{i=1}^s f(e_i)B_i=\sum_{j=s+1}^l f(c_j)D_j.
$$

\end{remark}

\begin{example}
Consider the matrix completion problem over $\C$ for the matrix
$$
\begin{pmatrix}
1 & \varepsilon\\
? & 1
\end{pmatrix}
$$
where $\varepsilon\in \C$ is some constant. If $\varepsilon\neq 0$, then the matrix can be completed to a rank $1$ matrix:
$$
\begin{pmatrix}
1 & \varepsilon\\
\varepsilon^{-1} & 1
\end{pmatrix}.
$$
But if $\varepsilon=0$, then the rank of any completion will be 2. We can view this as a rank minimization problem 
where 
$$
A=\begin{pmatrix}
1 & \varepsilon \\
0 & 1
\end{pmatrix}=e_1\otimes (e_1+\varepsilon e_2)+e_2\otimes e_2\in \C^2\otimes \C^2
$$
and 
$$
B=\begin{pmatrix}
0 & 0\\
1 & 0
\end{pmatrix}=e_2\otimes e_1\in \C^2\otimes \C^2,
$$
and we want to minimize the rank of $A+xB$. Let
$$
T=
e_1\otimes (e_1+\varepsilon e_2)\otimes e_2+e_2\otimes e_2\otimes e_2+e_2\otimes e_1\otimes e_1.
$$
For $\varepsilon\neq 0$, this tensor has rank $2$:
$$
T=(e_1+\varepsilon^{-1} e_2)\otimes (e_1+\varepsilon e_2)\otimes e_2+e_2\otimes e_1\otimes (e_1-\varepsilon^{-1}e_2).
$$
 But for $\varepsilon=0$ the tensor has rank $3$.
This is the {\em border rank phenomenon}: a rank 3 tensor can be the limit of rank 2 tensors.
Of course, for matrices (order 2 tensors), this is not possible.
\end{example}

\section{Reduction from {\bf 3--TR} to {\bf RM}}\label{3TRtoRM}
Suppose that $T\in \F^p\otimes \F^q\otimes \F^r$. We would like to find pure tensors $v_1,\dots,v_k$ such that $T=v_1+\cdots+v_k$
and $k$ is minimal. In this section we reduce this problem to rank minimization. 
First we need some theoretical results.
\begin{lemma}
We can write  $T=\sum_{i=1}^r S_i\otimes e_i$ where $S_1,\dots,S_r\in \F^p\otimes \F^q\cong \Mat_{p,q}(\F)$.
If $\rank T=l$ then there exist rank $1$ matrices $U_1,\dots,U_l\in \Mat_{p,q}(\F)$ such that $S_1,\dots,S_r$ lie in the 
span of $U_1,\dots,U_l$.
\end{lemma}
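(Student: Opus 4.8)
The plan is to use a rank decomposition of $T$ and read off the matrices $S_i$ from it. First I would fix the standard basis $e_1,\dots,e_r$ of $\F^r$ and observe that the expression $T=\sum_{i=1}^r S_i\otimes e_i$ determines $S_1,\dots,S_r\in\F^p\otimes\F^q$ uniquely: this is just the isomorphism $(\F^p\otimes\F^q)\otimes\F^r\cong(\F^p\otimes\F^q)^{\oplus r}$, or equivalently the result of applying $\id\otimes\id\otimes e_i^\ast$ to $T$, where $e_i^\ast$ is the $i$-th coordinate functional.

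Next, since $\rank T=l$, I would write
$$
T=\sum_{j=1}^l a_j\otimes b_j\otimes c_j
$$
with $a_j\in\F^p$, $b_j\in\F^q$, $c_j\in\F^r$, and set $U_j:=a_j\otimes b_j\in\Mat_{p,q}(\F)$, which is a rank $1$ matrix (or zero, in which case it can be discarded or replaced by an arbitrary rank $1$ matrix with coefficient $0$). Expanding each $c_j=\sum_{i=1}^r c_{j,i}e_i$ in the standard basis gives
$$
T=\sum_{j=1}^l U_j\otimes c_j=\sum_{i=1}^r\Bigl(\sum_{j=1}^l c_{j,i}U_j\Bigr)\otimes e_i.
$$
By the uniqueness noted above, $S_i=\sum_{j=1}^l c_{j,i}U_j$ for each $i$, so each $S_i$ lies in the span of $U_1,\dots,U_l$, as desired.

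There is no real obstacle here; the only point requiring a line of care is the uniqueness of the $S_i$, so that one may legitimately equate coefficients of $e_i$ on both sides, and the harmless bookkeeping that some $a_j\otimes b_j$ could vanish (handled by allowing the $U_j$ to be arbitrary rank $1$ matrices when the corresponding pure tensor in the decomposition is zero, which never happens in a minimal decomposition anyway).
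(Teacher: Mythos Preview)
Your proof is correct and is essentially identical to the paper's own argument: the paper also writes $T=\sum_{j=1}^l U_j\otimes f_j$ and applies the coordinate projection $\pi_i$ (your $e_i^\ast$) to the third factor to obtain $S_i=\sum_{j=1}^l \pi_i(f_j)U_j$. Your extra remarks about uniqueness of the $S_i$ and the possibility of a vanishing $a_j\otimes b_j$ are careful but not needed in a minimal decomposition.
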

\begin{proof}
We can write $T=\sum_{i=1}^l U_i\otimes f_i$. Let $\pi_i:\F^r\to \F$ be the projection on the $i$-th coordinate, and consider the projection
$$
\id\otimes\id\otimes \pi_i:\F^p\otimes \F^q\otimes \F^r\to \F^p\otimes \F^q.
$$
We have $(\id\otimes\id\otimes \pi_i)(T)=S_i=\sum_{j=1}^l \pi_i(f_j)U_j$.
\end{proof}
\begin{lemma}
Suppose that $a_1,\dots,a_k\in \F^p$, $b_1,\dots,b_k\in \F^q$ and $\lambda_1,\dots,\lambda_k\in \F$ and let
$$
A=\begin{pmatrix}a_1 & \cdots & a_k\end{pmatrix},\ 
B=\begin{pmatrix} b_1 & \cdots & b_k\end{pmatrix},\ \Lambda=\begin{pmatrix} \lambda_1 & & \\ & \ddots & \\ && \lambda_k\end{pmatrix},\ 
C=\begin{pmatrix} S & A & 0\\  0 & I _k& -\Lambda \\  B^t & 0 & I_k\end{pmatrix}. 
$$
Then we have
$$
\rank C=2k+\rank\Big(\textstyle S-\sum_{j=1}^k \lambda_j a_jb_j^t\Big).
$$
\end{lemma}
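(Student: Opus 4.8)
The plan is to reduce $C$ to block-diagonal form by elementary row and column operations, exploiting the two identity blocks $I_k$. View $C$ as having row blocks of sizes $p,k,k$ and column blocks of sizes $q,k,k$, so the top row block is $(S\mid A\mid 0)$, the middle is $(0\mid I_k\mid -\Lambda)$, and the bottom is $(B^t\mid 0\mid I_k)$. First I would use the middle row block to clear the block $A$ sitting above its $I_k$: subtracting $A$ times the middle row block from the top row block turns $(S\mid A\mid 0)$ into $(S\mid 0\mid A\Lambda)$. Next, using the bottom row block $(B^t\mid 0\mid I_k)$, subtract $A\Lambda$ times it from the (new) top row block; since that row block is now $(S\mid 0\mid A\Lambda)$, this produces $(S-A\Lambda B^t\mid 0\mid 0)$. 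At this point the top row block meets only the first column block.

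Then I would perform the symmetric clean-up with column operations on the resulting matrix, whose column blocks are $(S-A\Lambda B^t,0,B^t)$, $(0,I_k,0)$, and $(0,-\Lambda,I_k)$: adding $\Lambda$ times the middle column block to the right column block kills the $-\Lambda$ in the middle row block, turning the right column block into $(0,0,I_k)$; then subtracting $B^t$ times the right column block from the left column block kills the $B^t$ in the bottom row block. The net effect is that $C$ is equivalent to the block-diagonal matrix $\operatorname{diag}(S-A\Lambda B^t,\ I_k,\ I_k)$, hence $\rank C=\rank(S-A\Lambda B^t)+2k$.

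Finally I would observe that $A\Lambda B^t=\sum_{j=1}^k\lambda_j a_jb_j^t$: the $j$-th column of $A\Lambda$ is $\lambda_j a_j$, so $A\Lambda B^t=\sum_j(\lambda_j a_j)b_j^t$. Substituting gives the claimed formula. There is no genuine obstacle here; the only thing to watch is the order of the row operations — one must clear $A$ with the middle row block \emph{before} the resulting $A\Lambda$ can be cleared with the bottom row block — and one should check that each operation leaves the previously-cleared positions zero, which holds since every step only adds multiples of a row or column block that already vanishes there.
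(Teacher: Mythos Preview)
Your proof is correct and follows essentially the same idea as the paper: use the two $I_k$ blocks as pivots in a block Gaussian elimination so that the Schur complement $S-A\Lambda B^t$ appears. The paper carries this out with only two block row operations (first adding $\Lambda$ times the bottom block row to the middle, then subtracting $A$ times the middle from the top) to reach a block lower-triangular matrix and reads off the rank from there, whereas you perform four operations (two row, two column) to reach full block-diagonal form; this is a cosmetic difference, not a different argument.
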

\begin{proof}
We perform elementary row operations on the matrix $C$:
$$
\begin{pmatrix}
I_p & 0 & 0\\
0 & I_k & \Lambda\\
0 & 0 & I_k
\end{pmatrix} C=\begin{pmatrix}
I_p & 0 & 0\\
0 & I_k & \Lambda\\
0 & 0 & I_k
\end{pmatrix} 
\begin{pmatrix} S & A & 0\\  0 & I _k& -\Lambda \\  B^t & 0 & I_k\end{pmatrix}
=\begin{pmatrix}
S  & A & 0\\
\Lambda B^t & I_k & 0\\
B^t & 0 & I_k
\end{pmatrix}
$$
$$
\begin{pmatrix}
I_p & -A & 0\\
0 & I_k &0 \\
0 & 0 & I_k
\end{pmatrix}
\begin{pmatrix}
S  & A& 0\\
\Lambda B^t & I_k & 0\\
B^t & 0 & I_k
\end{pmatrix}=
\begin{pmatrix}
S-A\Lambda B^t  & 0 & 0\\
\Lambda B^t & I_k & 0\\
B^t & 0 & I_k
\end{pmatrix}
$$
So we have
$$
\rank C=\rank\begin{pmatrix}
S-A\Lambda  B^t  & 0 & 0\\
\Lambda B^t & I_k & 0\\
B^t & 0 & I_k
\end{pmatrix}
=2k+\rank(S-A\Lambda B^t)=2k+\rank\Big(\textstyle S-\sum_{j=1}^k \lambda_ja_jb_j^t\Big).
$$
\end{proof}
From now on, assume that $k\geq \rank T$. Let $A\in \Mat_{p,k}(\F)$ and $B\in \Mat_{q,k}(\F)$ be matrices with indeterminate entries.
For every $i$ with $1\leq i\leq r$ define
$$
\Lambda_i=\begin{pmatrix} \lambda_{i,1} & & \\ & \ddots & \\ & & \lambda_{i,k}
\end{pmatrix},
$$
where $\lambda_{i,1},\dots,\lambda_{i,k}$ are indeterminates.
Let ${\bf a}$ be a list of the entries of $A$, ${\bf b}$ a list of the entries of $B$ and $\lambda$ a list of all $\lambda_{i,j}$.
Define
$$
C_i({\bf a},{\bf b},\lambda)=\begin{pmatrix} S_i & A & 0 \\ 0 & I_k & -\Lambda_i\\
B^t  & 0 & I_k\end{pmatrix}
$$
and 
$$
U_i({\bf a},{\bf b},\lambda)=S_i-\sum_{j=1}^k\lambda_{i,j}a_jb_j^t
$$
where $a_1,\dots,a_k$ are the columns of $A$ and $b_1,\dots,b_k$ are the columns of $B$.
We have
$$
\rank C_i({\bf a}, {\bf b},\lambda)=2k+\rank U_i({\bf a},{\bf b},\lambda).
$$

We can write
\begin{multline*}
T=\sum_{i=1}^r S_i\otimes e_i=\sum_{i=1}^rU_i({\bf a},{\bf b},\lambda)\otimes e_i +\sum_{i=1}^r \sum_{j=1}^k \lambda_{i,j}a_jb_j^t\otimes c_i
=\\=\sum_{i=1}^r U_i({\bf a},{\bf b},\lambda)\otimes e_i+\sum_{j=1}^k a_jb_j^t\otimes (\sum_{i=1}^r \lambda_{i,j}c_i).
\end{multline*}
It follows that
\begin{equation}\label{eq:rankTupperbound}
\rank T\leq \sum_{i=1}^r \rank U_i({\bf a},{\bf b},\lambda)+\sum_{j=1}^k \rank(a_jb_j^t).
\end{equation}

Define 
$$
D_j({\bf a},{\bf b})=\begin{pmatrix} a_j\\ b_j\end{pmatrix}.
$$
Then we have
$$
\rank D_j({\bf a},{\bf b})\geq \rank(a_jb_j^t)
$$
with equality if $a_i$ and $b_j$ are both zero or both nonzero.
Define
$$
E({\bf a},{\bf b},\lambda)=\begin{pmatrix}
C_1({\bf a},{\bf b},\lambda) & & &&&\\ 
& \ddots &&&&\\
&& C_r({\bf a},{\bf b},\lambda) & & &\\
& & & D_1({\bf a},{\bf b}) & &\\
& & & & \ddots &\\
& & & & & D_k({\bf a},{\bf b})
\end{pmatrix}.
$$
\begin{theorem}
We have
$$
\min_{{\bf a},{\bf b},\lambda} \rank E({\bf a},{\bf b},\lambda)=2kr+\rank T.
$$
\end{theorem}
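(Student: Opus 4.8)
The plan is to exploit the block-diagonal shape of $E({\bf a},{\bf b},\lambda)$. Since its diagonal blocks are $C_1({\bf a},{\bf b},\lambda),\dots,C_r({\bf a},{\bf b},\lambda)$ and $D_1({\bf a},{\bf b}),\dots,D_k({\bf a},{\bf b})$, rank is additive over the blocks, so $\rank E=\sum_{i=1}^r\rank C_i+\sum_{j=1}^k\rank D_j$ for every choice of the indeterminates. Using the identity $\rank C_i({\bf a},{\bf b},\lambda)=2k+\rank U_i({\bf a},{\bf b},\lambda)$ proved just above (via the second lemma), this rewrites as $\rank E=2kr+\sum_{i=1}^r\rank U_i+\sum_{j=1}^k\rank D_j$.

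For the lower bound $\rank E\ge 2kr+\rank T$, I would combine $\rank D_j({\bf a},{\bf b})\ge\rank(a_jb_j^t)$ with the inequality~\eqref{eq:rankTupperbound}, which gives $\rank T\le\sum_{i=1}^r\rank U_i+\sum_{j=1}^k\rank(a_jb_j^t)\le\sum_{i=1}^r\rank U_i+\sum_{j=1}^k\rank D_j$. Substituting into the displayed expression for $\rank E$ yields $\rank E\ge 2kr+\rank T$ for all ${\bf a},{\bf b},\lambda$, hence $\min_{{\bf a},{\bf b},\lambda}\rank E\ge 2kr+\rank T$.

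For the matching upper bound I would exhibit an explicit choice of the indeterminates. Let $l=\rank T$; since $k\ge l$, apply the first lemma of this section to write $S_i=\sum_{j=1}^l\mu_{i,j}U_j$ with each $U_j$ of rank $1$, say $U_j=a_jb_j^t$ with $a_j\ne 0$ and $b_j\ne 0$. Take the columns of $A$ to be $a_1,\dots,a_l,0,\dots,0$, the columns of $B$ to be $b_1,\dots,b_l,0,\dots,0$, and $\lambda_{i,j}=\mu_{i,j}$ for $j\le l$, $\lambda_{i,j}=0$ for $j>l$. Then $U_i({\bf a},{\bf b},\lambda)=S_i-\sum_{j=1}^l\mu_{i,j}a_jb_j^t=0$ for every $i$, while $D_j({\bf a},{\bf b})$ has rank $1$ for $j\le l$ (both $a_j$ and $b_j$ are nonzero) and rank $0$ for $j>l$. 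Hence $\rank E=2kr+0+l=2kr+\rank T$ for this choice, which together with the lower bound finishes the proof.

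I expect the only real subtlety to be the equality case: one must pick ${\bf a},{\bf b},\lambda$ that \emph{simultaneously} annihilate all the $U_i$ and make the $D_j$ blocks contribute exactly $\rank T$, not more. It is precisely the span lemma combined with the standing hypothesis $k\ge\rank T$ that permits such a simultaneous choice, and zero-padding the unused columns of $A$ and $B$ is what keeps the extra $D_j$ blocks from inflating the rank.
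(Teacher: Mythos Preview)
Your proof is correct and follows essentially the same route as the paper: the lower bound via the block-diagonal rank formula, $\rank C_i=2k+\rank U_i$, $\rank D_j\ge\rank(a_jb_j^t)$, and inequality~\eqref{eq:rankTupperbound} is verbatim the paper's argument; for the upper bound the paper takes a rank-$l$ decomposition $T=\sum_{j=1}^l a_j\otimes b_j\otimes c_j$ directly and sets $\lambda_{i,j}=\pi_i(c_j)$, which is exactly what your appeal to the span lemma unpacks to. The only cosmetic difference is that you cite the lemma whereas the paper repeats its content inline.
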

\begin{proof}

From (\ref{eq:rankTupperbound}) follows that
\begin{multline*}
\rank E({\bf a},{\bf b},\lambda)=\sum_{i=1}^r \rank(C_i({\bf a},{\bf b},\lambda) +\sum_{j=1}^k \rank D_j({\bf a},{\bf b}) \geq\\ \geq
 2kr+\sum_{i=1}^r \rank U_i({\bf a},{\bf b},\lambda)+\sum_{j=1}^k \rank(a_jb_j^t)
\geq 2kr+\rank T.
\end{multline*}
We have to show now that we can have equality for some choice of ${\bf a},{\bf b},\lambda$.
Suppose that $\rank T=l$. Then we can write $T=\sum_{j=1}^l a_j\otimes b_j\otimes c_j$ with $l\leq k$. Define $a_j=0$ and $b_j=0$
for $j=l+1,\dots,k$. If we apply the map $$
\id\otimes\id\otimes \pi_i:\F^p\otimes \F^q\otimes \F^r\to \F^p\otimes \F^q.
$$
to $T$, we get
$$
S_i=\id\otimes\id \otimes \pi_i(T)=\sum_{j=1}^l \pi_i(c_j)a_jb_j^t
$$
Define $\lambda_{i,j}=\pi_i(c_j)$. Then we have $U_i({\bf a},{\bf b},\lambda)=0$ for $i=1,2,\dots,r$. 
It follows that
\begin{multline*}
\rank E({\bf a},{\bf b},\lambda)=\sum_{i=1}^r \rank C_i({\bf a},{\bf b},\lambda)+\sum_{j=1}^k \rank D_k({\bf a},{\bf b})=\\=\Big(\sum_{i=1}^r 2k\Big)+l=rk+l=2rk+\rank T.
\end{multline*}

\end{proof}

Suppose that $\rank E({\bf a},{\bf b},\lambda)$ is minimal over all choices for ${\bf a},{\bf b},\lambda$.
We can write
\begin{multline*}
T=\sum_{i=1}^r S_i\otimes e_i=\sum_{i=1}^r U_i({\bf a},{\bf b},\lambda)\otimes e_i+\sum_{i=1}^r (\sum_{j=1}^k \lambda_{i,j} a_jb_j^t)\otimes e_i=\\=
\sum_{i=1}^r U_i({\bf a},{\bf b},\lambda)\otimes e_i+\sum_{j=1}^k a_jb_j^t\otimes \Big(\textstyle\sum_{i=1}^k \lambda_{i,j}e_i\Big).
\end{multline*}
If $d_i=\rank U_i({\bf a},{\bf b},\lambda)$ then we can write $U_i({\bf a},{\bf b},\lambda)$ as sum of $d_i$ rank $1$ matrices.
Since 
$$
\sum_{i=1}^r\rank U_i({\bf a},{\bf b},\lambda)+\sum_{i=1}^k \rank(a_jb_j^t)\leq  \sum_{i=1}^r \rank C_i({\bf a},{\bf b},\lambda)+\sum_{j=1}^k\rank D_j({\bf a},{\bf b})-2kr=l,
$$
we have written $T$ as a sum of $l=\rank T$ pure tensors.

The matrix $E({\bf a},{\bf b},\lambda)$ has $(p+2k)r+(p+q)k$ rows, and $(q+2k)r+k$ columns. There are $(p+q+k)r$ variables. 
We can write
$$
E({\bf a},{\bf b},\lambda)=A+\sum_{i,j}a_{i,j}F_{i,j}+\sum_{i,j}b_{i,j} G_{i,j}+\sum_{i,j}\lambda_{i,j}H_{i,j}
$$
Here $F_{i,j}$ and $G_{i,j}$ have rank $\leq r+1$ because the variables $a_{i,j}$ and $b_{i,j}$ appear  $r+1$ times.
The matrices $H_{i,j}$ have rank $1$ because they only have 1 nonzero entry.

\begin{example}
Suppose that $T=\sum_{i,j,k} a_{i,j,k} e_i\otimes e_j\otimes e_k$ is a tensor in $\F^2\otimes \F^2\otimes \F^2$.
Identifying $\F^2\otimes \F^2\otimes \F^2$ with $\Mat_{2,2}(\F)\otimes \F^2$ gives us
$$
T=\begin{pmatrix}
t_{1,1,1} & t_{1,2,1}\\
t_{2,1,1} & t_{2,2,1}
\end{pmatrix}\otimes e_1+\begin{pmatrix}
t_{1,1,2} & t_{1,2,2}\\
t_{2,1,2} & t_{2,2,2}
\end{pmatrix}\otimes e_2=S_1\otimes e_1+S_2\otimes e_2.
$$
The largest possible rank of a tensor $T$ in $\F^2\otimes \F^2\otimes \F^2$ is $3$. 
So let us take $k=3$.
Define 
$$
A=\begin{pmatrix}
a_{1,1} & a_{1,2} & a_{1,3}\\
a_{2,1} & a_{2,2} & a_{2,3}
\end{pmatrix},
B=\begin{pmatrix}
b_{1,1} & b_{1,2} & b_{1,3} \\
b_{1,2} & b_{2,2} & b_{2,3}
\end{pmatrix}.
$$
For $i=1,2$, define
$$
\Lambda_i=\begin{pmatrix}
\lambda_{i,1} & 0 & 0\\
0 & \lambda_{i,2} & 0 \\
0 & 0 & \lambda_{i,3}\end{pmatrix},
$$
$$
C_i=\begin{pmatrix}
S_i & A & 0\\
0 & I_3 & \Lambda_i\\
B^t & 0 & I_3
\end{pmatrix}=\left(\begin{array}{cc|ccc|ccc}
t_{1,1,i} & t_{1,2,i} & a_{1,1} & a_{1,2} & a_{1,3} & 0 & 0 & 0\\
t_{2,1,i} & t_{2,2,i} & a_{2,1} & a_{2,2} & a_{2,3} & 0 & 0 & 0\\ \hline
0 & 0 & 1 &0 & 0 & \lambda_{i,1} & 0 & 0 \\
0 &0 & 0 & 1 & 0 & 0 & \lambda_{i,2} & 0\\
0 & 0 & 0 & 0 & 1 & 0 & 0 & \lambda_{i,3} \\ \hline
b_{1,1} & b_{2,1} & 0 & 0 & 0 & 1 & 0 & 0\\
b_{1,2} & b_{2,2} & 0 & 0 & 0 & 0 & 1 & 0\\
b_{1,3} & b_{2,3} & 0 & 0 & 0 & 0 & 0 & 1
\end{array}\right)
$$
$$
D_j=\begin{pmatrix}
a_{1,j}\\
a_{2,j}\\
b_{1,j}\\
b_{2,j}
\end{pmatrix}
$$
So finally, we have
\begin{multline*}
E({\bf a},{\bf b},\lambda)=\begin{pmatrix}
C_1 & 0& 0& 0 & 0\\
0 & C_2 & 0 &0  &0\\
0 & 0 & D_1 & 0 & 0\\
0 & 0 & 0 & D_2 & 0\\
0 & 0 & 0 & 0 & D_3
\end{pmatrix}=\\
\setlength\arraycolsep{3pt}
\left(\begin{array}{cccccccc|cccccccc|c|c|c}
t_{1,1,1} & t_{1,2,1} & a_{1,1} & a_{1,2} & a_{1,3} & 0 & 0 & 0      & 0 & 0 & 0 & 0 & 0 & 0 & 0 & 0 & 0 & 0 & 0\\
t_{2,1,1} & t_{2,2,1} & a_{2,1} & a_{2,2} & a_{2,3} & 0 & 0 & 0  & 0 & 0 & 0 & 0 & 0 & 0 & 0 & 0 & 0 & 0 & 0\\
0 & 0 & 1 &0 & 0 & \lambda_{1,1} & 0 & 0  & 0 & 0 & 0 & 0 & 0 & 0 & 0 & 0 & 0 & 0 & 0 \\
0 &0 & 0 & 1 & 0 & 0 & \lambda_{1,2} & 0    & 0 & 0 & 0 & 0 & 0 & 0 & 0 & 0 & 0 & 0 & 0\\
0 & 0 & 0 & 0 & 1 & 0 & 0 & \lambda_{1,3}  & 0 & 0 & 0 & 0 & 0 & 0 & 0 & 0 & 0 & 0 & 0\\ 
b_{1,1} & b_{2,1} & 0 & 0 & 0 & 1 & 0 & 0  & 0 & 0 & 0 & 0 & 0 & 0 & 0 & 0 & 0 & 0 & 0\\
b_{1,2} & b_{2,2} & 0 & 0 & 0 & 0 & 1 & 0 & 0 & 0 & 0 & 0 & 0 & 0 & 0 & 0 & 0 & 0 & 0\\
b_{1,3} & b_{2,3} & 0 & 0 & 0 & 0 & 0 & 1  & 0 & 0 & 0 & 0 & 0 & 0 & 0 & 0 & 0 & 0 & 0 \\ \hline
0 & 0 & 0 & 0 & 0 & 0 & 0 & 0 &    t_{1,1,2} & t_{1,2,2} & a_{1,1} & a_{1,2} & a_{1,3} & 0 & 0 & 0    & 0 & 0 & 0\\
0 & 0 & 0 & 0 & 0 & 0 & 0 & 0 &t_{2,1,2} & t_{2,2,2} & a_{2,1} & a_{2,2} & a_{2,3} & 0 & 0 & 0& 0 & 0 & 0\\ 
0 & 0 & 0 & 0 & 0 & 0 & 0 & 0 &0 & 0 & 1 &0 & 0 & \lambda_{2,1} & 0 & 0& 0 & 0 & 0 \\
0 & 0 & 0 & 0 & 0 & 0 & 0 & 0 &0 &0 & 0 & 1 & 0 & 0 & \lambda_{2,2} & 0& 0 & 0 & 0\\
0 & 0 & 0 & 0 & 0 & 0 & 0 & 0 &0 & 0 & 0 & 0 & 1 & 0 & 0 & \lambda_{2,3}& 0 & 0 & 0 \\ 
0 & 0 & 0 & 0 & 0 & 0 & 0 & 0 &b_{1,1} & b_{2,1} & 0 & 0 & 0 & 1 & 0 & 0& 0 & 0 & 0\\
0 & 0 & 0 & 0 & 0 & 0 & 0 & 0 &b_{1,2} & b_{2,2} & 0 & 0 & 0 & 0 & 1 & 0& 0 & 0 & 0\\
0 & 0 & 0 & 0 & 0 & 0 & 0 & 0 &b_{1,3} & b_{2,3} & 0 & 0 & 0 & 0 & 0 & 1& 0 & 0 & 0\\ \hline
0 & 0 & 0 & 0 & 0 & 0 & 0 & 0 & 0 & 0 & 0 & 0 & 0 & 0 & 0 & 0 & a_{1,1} & 0 & 0\\
0 & 0 & 0 & 0 & 0 & 0 & 0 & 0 & 0 & 0 & 0 & 0 & 0 & 0 & 0 & 0 & a_{2,1} & 0 & 0\\
0 & 0 & 0 & 0 & 0 & 0 & 0 & 0 & 0 & 0 & 0 & 0 & 0 & 0 & 0 & 0 & b_{1,1} & 0 & 0\\
0 & 0 & 0 & 0 & 0 & 0 & 0 & 0 & 0 & 0 & 0 & 0 & 0 & 0 & 0 & 0 & b_{2,1} & 0 & 0\\ \hline
0 & 0 & 0 & 0 & 0 & 0 & 0 & 0 & 0 & 0 & 0 & 0 & 0 & 0 & 0 & 0 & 0  &  a_{1,2} & 0\\
0 & 0 & 0 & 0 & 0 & 0 & 0 & 0 & 0 & 0 & 0 & 0 & 0 & 0 & 0 & 0 & 0  &  a_{2,2} & 0\\
0 & 0 & 0 & 0 & 0 & 0 & 0 & 0 & 0 & 0 & 0 & 0 & 0 & 0 & 0 & 0 & 0  &  b_{1,2} & 0\\
0 & 0 & 0 & 0 & 0 & 0 & 0 & 0 & 0 & 0 & 0 & 0 & 0 & 0 & 0 & 0 & 0  &  b_{2,2} & 0\\ \hline
0 & 0 & 0 & 0 & 0 & 0 & 0 & 0 & 0 & 0 & 0 & 0 & 0 & 0 & 0 & 0 & 0  &  0 & a_{1,3}\\
0 & 0 & 0 & 0 & 0 & 0 & 0 & 0 & 0 & 0 & 0 & 0 & 0 & 0 & 0 & 0 & 0  &  0 & a_{2,3}\\
0 & 0 & 0 & 0 & 0 & 0 & 0 & 0 & 0 & 0 & 0 & 0 & 0 & 0 & 0 & 0 & 0  &  0 & b_{1,3}\\
0 & 0 & 0 & 0 & 0 & 0 & 0 & 0 & 0 & 0 & 0 & 0 & 0 & 0 & 0 & 0 & 0  &  0 & b_{2,3}\end{array}\right).
\end{multline*}
We have
$$
\min_{{\bf a},{\bf b}, \lambda} E({\bf a},{\bf b},\lambda)=12+\rank T.
$$

\end{example}

    \section{Reduction from ${\bf TR}$ to ${\bf RM}$}\label{TRtoRM}
    For a matrix $A\in \Mat_{m,n}(\F)$, we define its {\em padding} as the 
    partially filled matrix
    $$
    \pad(A)=
    \begin{pmatrix}
    A & ?\\
    ? & 1
    \end{pmatrix}
    $$
    \begin{lemma}
  We have
  $  \minrank \pad(A)=\max\{1,\rank A \}$.
    \end{lemma}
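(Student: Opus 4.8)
The plan is to establish the two inequalities $\minrank \pad(A) \geq \max\{1,\rank A\}$ and $\minrank \pad(A) \leq \max\{1,\rank A\}$ separately. Write an arbitrary completion of $\pad(A)$ as
$$
M = \begin{pmatrix} A & u \\ v^t & 1 \end{pmatrix}, \qquad u \in \F^m,\ v \in \F^n .
$$
For the lower bound, observe that $A$ is a submatrix of $M$, so $\rank M \geq \rank A$, and that the $(m+1,n+1)$-entry of $M$ equals $1 \neq 0$, so $M \neq 0$ and hence $\rank M \geq 1$. Thus $\rank M \geq \max\{1,\rank A\}$ for every choice of $u,v$, which gives $\minrank \pad(A) \geq \max\{1,\rank A\}$.

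For the upper bound I would exhibit a single completion of the claimed rank. If $A = 0$, take $u = 0$ and $v = 0$; then $M$ has a single nonzero entry and $\rank M = 1 = \max\{1,\rank A\}$. If $A \neq 0$, fix an index $i$ for which the $i$-th row of $A$ is nonzero, set $v^t := e_i^t A$ (the $i$-th row of $A$), choose $c \in \F^n$ with $v^t c = 1$ — possible since $v^t \neq 0$ — and put $u := Ac$. A direct block multiplication then shows
$$
M = \begin{pmatrix} A & u \\ v^t & 1 \end{pmatrix} = \begin{pmatrix} I_m \\ e_i^t \end{pmatrix} A \begin{pmatrix} I_n & c \end{pmatrix},
$$
since the right-hand side equals $\begin{pmatrix} A & Ac \\ e_i^t A & e_i^t A c\end{pmatrix}$ and $e_i^t A c = v^t c = 1$. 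Hence $\rank M \leq \rank A$, and combined with the lower bound this forces $\rank M = \rank A = \max\{1,\rank A\}$.

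The only step needing care is the construction in the case $A \neq 0$: the bottom row $v^t$ must be placed inside the row space of $A$ so that adjoining it does not increase the rank, yet must remain nonzero so that the prescribed corner value $1$ can actually be attained by scaling the last column; and the last column must then be placed compatibly inside the column space of $\left(\begin{smallmatrix} A \\ v^t\end{smallmatrix}\right)$. Taking $v^t$ to be a nonzero row of $A$ and $u = Ac$ with $v^t c = 1$ achieves exactly this, and the rank-one-factorization display above is the thing I would verify explicitly; everything else is immediate.
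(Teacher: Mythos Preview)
Your proof is correct and follows essentially the same strategy as the paper: the lower bound is immediate since $A$ is a submatrix and the corner entry is nonzero, and for the upper bound one places the extra row and column inside the row and column spaces of $A$ while arranging the corner to be $1$. The paper does this by first choosing a nonzero column $v$ of $A$ and then picking the bottom row as a scalar multiple of a row of $(A\ v)$ with nonzero last entry, whereas you go row-first and package the construction as the explicit factorization $M=\begin{pmatrix}I_m\\ e_i^t\end{pmatrix}A\begin{pmatrix}I_n & c\end{pmatrix}$; the two arguments are dual variants of the same idea.
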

    \begin{proof}
    It is clear that $\rank \pad(A)\geq \max\{1,\rank A\}$, and we have equality if $A=0$.  Suppose that $A$ is nonzero and let $v$ be  a nonzero column $v$ of $A$.
        Choose a vector $w$ so that $(w^t\ 1)$ is a multiple of a row of the matrix $(A\ v)$.
    We have
    $$
    \rank \begin{pmatrix} A & v\\ w^t &1 \end{pmatrix}=\rank\begin{pmatrix}A & v\end{pmatrix}=\rank(A)=\max\{1,\rank(A)\}.
    $$
    \end{proof}

   Suppose that $T\in \F^{n_1}\otimes \cdots \otimes \F^{n_d}$.  
   Inductively, we will define partially filled matrices $A(T)$ and $B(T)$.
   If $d=2$ then $T$ can be viewed as a $n_1\times n_2$ matrix and we define $A(T)=\pad(T)$ and $B(T)=T$.

    We can write
    $$
    T=\sum_{i=1}^{n_d} S_i\otimes e_i.
    $$
    where $S_i\in \F^{n_1}\otimes \cdots \otimes \F^{n_{d-1}}$.
    We choose a linear isomorphism
    $$
    \phi:\F^{n_1}\otimes \cdots\otimes \F^{n_{d-1}}\to \F^{n_1n_2\cdots n_{d-1}}.
    $$
    where $\phi(S)$ will be viewed as a column vector.
    Define
    $$
    N(T)=\begin{pmatrix}
    \phi(S_1) & \cdots & \phi(S_{n_d})
    \end{pmatrix}
    $$
    and
    $$
    A(T)=
    \begin{pmatrix}
    \pad(N(T)) & & &\\
    & A(S_1) & & \\
    & & \ddots & \\
    & & & A(S_{n_d})\end{pmatrix}
    $$
    \begin{lemma}
    We have the inequality
    $$
    \minrank A(T)\geq \max\{1,\rank T\}+K(n_1,\dots,n_d),
    $$
    where 
    $$K(n_1,\dots,n_d)=n_3n_4\cdots n_d+n_4n_5\cdots n_d+\cdots+n_d.
    $$
    If $T$ has rank 0 or 1 then we have equality.
    \end{lemma}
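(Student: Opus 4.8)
The plan is to induct on the order $d$. The base case $d=2$ is immediate: there $A(T)=\pad(T)$ and $K(n_1,n_2)=0$ (an empty sum), so the padding lemma gives $\minrank A(T)=\max\{1,\rank T\}$, which is the asserted equality in every case.

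For $d\ge 3$, the matrix $A(T)$ is block diagonal with diagonal blocks $\pad(N(T)),A(S_1),\dots,A(S_{n_d})$ and all off-diagonal blocks filled with zeros. Since distinct blocks share no unknown entries, every completion of $A(T)$ is block diagonal, and the rank of a block diagonal matrix is the sum of the ranks of its blocks; hence
$$
\minrank A(T)=\minrank\pad(N(T))+\sum_{i=1}^{n_d}\minrank A(S_i).
$$
Applying the padding lemma to the first term and the inductive hypothesis to the rest gives
$$
\minrank A(T)\ \ge\ \max\{1,\rank N(T)\}+\sum_{i=1}^{n_d}\max\{1,\rank S_i\}+n_d\,K(n_1,\dots,n_{d-1}).
$$
Because the definition of $K$ yields the recursion $K(n_1,\dots,n_d)=n_d+n_d\,K(n_1,\dots,n_{d-1})$, the desired bound $\minrank A(T)\ge\max\{1,\rank T\}+K(n_1,\dots,n_d)$ reduces to the purely linear-algebraic inequality
$$
\max\{1,\rank N(T)\}+\sum_{i=1}^{n_d}\max\{1,\rank S_i\}\ \ge\ \max\{1,\rank T\}+n_d .
$$

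The heart of the argument is this last inequality. When $T=0$ both sides equal $1+n_d$, so assume $T\ne 0$; then $N(T)\ne 0$, and subtracting $n_d$ from both sides it suffices to prove $\rank T\le\rank N(T)+\sum_{i=1}^{n_d}\max\{0,\rank S_i-1\}$. Let $\rho=\rank N(T)$, the dimension of $W=\operatorname{span}(S_1,\dots,S_{n_d})$, and choose a maximal linearly independent subset of the slices, say $S_1,\dots,S_\rho$ after reindexing. Each $S_i$ is a combination $\sum_{k\le\rho}\alpha_{i,k}S_k$, so $T=\sum_i S_i\otimes e_i=\sum_{k=1}^\rho S_k\otimes\big(\sum_i\alpha_{i,k}e_i\big)$ and therefore $\rank T\le\sum_{k=1}^\rho\rank S_k$. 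Since $S_1,\dots,S_\rho$ are nonzero we may write $\rank S_k=1+\max\{0,\rank S_k-1\}$; summing and then enlarging the index set from $\{1,\dots,\rho\}$ to $\{1,\dots,n_d\}$ (legitimate because all summands are nonnegative) yields the claim. It is essential here that the independent set be chosen from among the actual slices rather than taken to be an arbitrary basis of $W$, since only then can each $\rank S_k-1$ be charged against the sum over all $i$.

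For the equality statement, suppose $\rank T\le 1$. Then every slice $S_i$ has rank at most $1$ and the columns of $N(T)$ are all proportional, so $\minrank\pad(N(T))=1$ and, by the equality case of the inductive hypothesis, $\minrank A(S_i)=1+K(n_1,\dots,n_{d-1})$ for each $i$; summing the blocks gives $\minrank A(T)=1+n_d\big(1+K(n_1,\dots,n_{d-1})\big)=1+K(n_1,\dots,n_d)=\max\{1,\rank T\}+K(n_1,\dots,n_d)$. I expect the main obstacle to be the linear-algebra inequality above together with the bookkeeping that reconciles the $\max\{1,\cdot\}$ terms with the recursion for $K$; the block-diagonal completion statement and the base case are routine.
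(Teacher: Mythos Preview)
Your proof is correct, and it reaches the same conclusion by a somewhat cleaner route than the paper. The paper argues by a double induction: first on $d$, and then for $d\ge 3$ by an inner induction on $n_d$, splitting into the cases where $S_1,\dots,S_{n_d}$ are linearly independent (so that $\rank N(T)=\rank N(T')+1$ and one invokes subadditivity $\rank T\le\rank T'+\rank S_{n_d}$) or where $S_{n_d}$ lies in the span of the others (so that $\rank T=\rank T'$). You bypass the inner induction entirely by proving the single inequality
\[
\rank T\ \le\ \rank N(T)+\sum_{i=1}^{n_d}\max\{0,\rank S_i-1\}
\]
directly: pick a maximal independent subfamily of the slices, rewrite $T$ in that basis, and charge each $\rank S_k-1$ to the global sum. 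This is exactly the content the paper extracts one slice at a time via the $n_d$-induction, so the two arguments are equivalent in spirit; your packaging is shorter and makes the role of $\rank N(T)$ (the number of independent slices) more transparent, while the paper's version has the minor advantage of never needing to name the key inequality explicitly. Your handling of the equality case and of the recursion $K(n_1,\dots,n_d)=n_d+n_d\,K(n_1,\dots,n_{d-1})$ matches the paper's.
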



    \begin{proof}
    We prove the inequality by induction on $d$. If $d=2$ then
    $A(T)=\pad(T)$ and $K(n_1,n_2)=0$. So we have
    $$\minrank A(T)=\minrank \pad(T)=\max\{\rank T ,1\}=\max\{\rank T,1\}+K(n_1,n_2).
    $$
    
    Suppose that $d\geq 3$. We prove the  inequality by induction on $n_d$.
    If $n_d=1$ then we have $T=S_1\otimes e_1$,  $\rank \pad(N(T))=1$, and 
    $$K(n_1,\dots,n_d)=K(n_1,\dots,n_{d-1},1)=K(n_1,\dots,n_{d-1})+1.
    $$
      It follows that
    \begin{multline*}
    \minrank A(T)=\minrank \pad(N(T))+\minrank A(S_1)\geq\\ \geq 1+\max\{1,\rank S_1\}
    +K(n_1,\dots,n_{d-1})=\max\{1,\rank T\}+K(n_1,\dots,n_d).
    \end{multline*}
    If $T$ has rank $\leq 1$, then $\rank S_1\leq 1$ and we have equality.
    
    Suppose that $n_d>1$.
    Let $T'=\sum_{i=1}^{n_d-1} S_i\otimes e_i$.
    
    First, assume that $S_1,\dots,S_{n_d}$ are linearly independent. 
    Then $T$ and $T'$ are nonzero. We have
    \begin{multline*}
    \minrank A(T)=\minrank \pad(N(T))+\sum_{i=1}^{n_d} \minrank A(S_i)=
    \rank N(T) +\sum_{i=1}^{n_d}\minrank A(S_i)=\\=
    \rank N(T')+1+\sum_{i=1}^{n_d}\minrank A(S_i)=
    \minrank A(T')+1+\minrank A(S_{n_d})\geq\\\geq
    \rank T'+K(n_1,\dots,n_{d-1},n_d-1)+1+\rank S_{n_d}+K(n_1,\dots,n_{d-1})=\\=
    \rank T'+\rank S_{n_d}+K(n_1,\dots,n_d)\geq \rank T+K(n_1,\dots,n_d).
    \end{multline*}
    
    If $S_1,\dots,S_{n_d}$ are linearly dependent, we may assume without
    loss of generality that $S_{n_d}$ lies in the span of $S_1,\dots,S_{n_d-1}$.
    Then we have
   $\rank T =\rank T'$, and 
   $$\minrank \pad(N(T))=\minrank \pad(N(T')).$$
   It follows that
   \begin{multline*}
   \minrank A(T)=\minrank \pad(N(T))+\sum_{i=1}^{n_d}\minrank A(S_i)=\\=
   \minrank \pad(N(T'))+\sum_{i=1}^{n_d}\minrank A(S_i)=
   \minrank A(T')+\minrank A(S_{n_d})\geq\\ \geq  \max\{1,\rank T' \}+K(n_1,\dots,n_{d-1},n_d-1)+
   \max\{1,\rank S_{n_d}\}+K(n_1,\dots,n_{d-1})\geq\\ \geq
    \max\{1,\rank T\}+K(n_1,\dots,n_{d-1},n_d-1)+K(n_1,\dots,n_{d-1})+1=\\ =\max\{1,\rank T\}+K(n_1,\dots,n_d).
   \end{multline*}
   If $\rank T\leq 1$ then $\rank S_i\leq 1$ for all $i$, and we have
   \begin{multline*}
   \minrank A(T)=\minrank \pad(N(T))+\sum_{i=1}^{n_d}\rank A(S_i)=\\=
   1+n_d(1+K(n_1,\dots,n_{d-1}))=1+K(n_1,\dots,n_d)=\max\{\rank T,1\}+K(n_1,\dots,n_d).
   \end{multline*}
   \end{proof}
   Let us define
   $$
   B(T)=\begin{pmatrix}   N(T) & & &\\
   & A(S_1) & &\\
   && \ddots &\\
   &&&& A(S_{n_d})\end{pmatrix}.
   $$
   \begin{lemma}
   We have
   $$
   \minrank B(T)\geq \rank T+K(n_1,\dots,n_d)
   $$
   with equality if $rank(T)\leq 1$.
   \end{lemma}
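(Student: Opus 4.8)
The plan is to deduce this lemma directly from the previous lemma (the lower bound for $\minrank A(T)$) together with the padding identity $\minrank\pad(A)=\max\{1,\rank A\}$; no fresh induction is needed, because $B(T)$ is obtained from $A(T)$ merely by replacing the corner block $\pad(N(T))$ with $N(T)$ itself.

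First I would note that, exactly as for $A(T)$, the block-diagonal shape of $B(T)$ gives
$$
\minrank B(T)=\rank N(T)+\sum_{i=1}^{n_d}\minrank A(S_i),
$$
where $N(T)$ contributes its ordinary rank since it is a fully specified matrix (no free entries), and the blocks, being disjoint, may be completed independently. Subtracting this from the corresponding identity $\minrank A(T)=\minrank\pad(N(T))+\sum_{i=1}^{n_d}\minrank A(S_i)$ and using $\minrank\pad(N(T))=\max\{1,\rank N(T)\}$ yields
$$
\minrank A(T)-\minrank B(T)=\max\{1,\rank N(T)\}-\rank N(T).
$$
Since $\max\{1,x\}-x$ equals $1$ when $x=0$ and $0$ otherwise, and $N(T)=0$ precisely when $T=0$, this says $\minrank B(T)=\minrank A(T)$ if $T\neq 0$ and $\minrank B(T)=\minrank A(T)-1$ if $T=0$. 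In the base case $d=2$ there is no block $N(T)$, but the same relation holds verbatim from $A(T)=\pad(T)$ and $B(T)=T$.

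Now I would split on whether $T$ vanishes. If $T\neq 0$, then $\rank T\ge 1$, so the previous lemma gives $\minrank B(T)=\minrank A(T)\ge\max\{1,\rank T\}+K(n_1,\dots,n_d)=\rank T+K(n_1,\dots,n_d)$, with equality when $\rank T=1$ by the equality clause of that lemma. If $T=0$, the equality clause of the previous lemma gives $\minrank A(T)=1+K(n_1,\dots,n_d)$, hence $\minrank B(T)=K(n_1,\dots,n_d)=\rank T+K(n_1,\dots,n_d)$. Combining the two cases yields the inequality in general and equality whenever $\rank T\le 1$, which is exactly the claim.

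I do not anticipate a real obstacle here: all the substantive content is already carried by the $A(T)$ lemma and the padding lemma, and what remains is the elementary bookkeeping identity $\max\{1,\rank N(T)\}-\rank N(T)=1$ if $T=0$ and $=0$ otherwise, plus the observation that $N(T)$ has no free entries. The one point I would still check explicitly is the $d=2$ instance, where $K(n_1,n_2)=0$ and $B(T)=T$, so the statement degenerates to the tautology $\rank T\ge\rank T$ with equality always.
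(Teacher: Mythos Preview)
Your proof is correct and follows essentially the same approach as the paper's: both arguments observe that $B(T)$ differs from $A(T)$ only in that the corner block $\pad(N(T))$ is replaced by $N(T)$, deduce $\minrank B(T)=\minrank A(T)$ when $T\neq 0$ and $\minrank B(T)=\minrank A(T)-1$ when $T=0$, and then invoke the previous lemma in each case. Your write-up is slightly more explicit about the bookkeeping identity and the $d=2$ base case, but the substance is identical.
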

   \begin{proof}
   If $T$ is nonzero, then we have $\rank N(T)=\minrank \pad(N(T))$, so
   $$
   \minrank B(T)=\minrank A(T)\geq \rank T+K(n_1,\dots,n_d).
   $$
   with equality if $\rank T=1$.
   
   If $T=0$, then $\rank N(T)=0$ and $\rank \pad(N(T))=1$. It follows that
   $$
   \minrank B(T)=\minrank A(T)-1\geq K(n_1,\dots,n_d)=\rank T+K(n_1,\dots,n_d).
   $$
   \end{proof}
 For tensors $U_1,\dots,U_k$ we define
 $$
 C(U_1,\dots,U_k)=\begin{pmatrix}
 B(U_1-U_2) & & &\\
 & \ddots & &\\
 & & B(U_{k-2}-U_{k-1}) & \\
 & & & B(U_{k-1}-U_{k}).
 \end{pmatrix}
 $$
 \begin{lemma}
If $\rank T\leq k$, then we have
 $$
 \rank T+kK(n_1,\dots,n_d)=\min_{U_1,\dots,U_{k-1}} \minrank C(T,U_1,\dots,U_{k-1}).
 $$
 \end{lemma}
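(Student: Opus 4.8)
The plan is to prove ``$\ge$'' and ``$\le$'' separately, using only the previous lemma on $\minrank B(\cdot)$ and the subadditivity of tensor rank. Write $U_0:=T$ and $U_k:=0$, so that $C(T,U_1,\dots,U_{k-1})$ is block diagonal with diagonal blocks $B(U_{i-1}-U_i)$ for $i=1,\dots,k$ (that is, $B(T-U_1),B(U_1-U_2),\dots,B(U_{k-2}-U_{k-1}),B(U_{k-1})$), and set $V_i:=U_{i-1}-U_i$, so that $V_1+\dots+V_k=T$.

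For ``$\ge$'': for every choice of the tensors $U_1,\dots,U_{k-1}$ and every filling of the padding entries, the rank of a block diagonal matrix is the sum of the ranks of its blocks, and by the $B$-lemma each block satisfies $\rank B(V_i)\ge \minrank B(V_i)\ge \rank V_i+K(n_1,\dots,n_d)$. Summing over $i$ and using subadditivity of tensor rank, $\sum_{i=1}^k \rank V_i\ge \rank(V_1+\dots+V_k)=\rank T$, hence
$$
\rank C(T,U_1,\dots,U_{k-1})\ \ge\ kK(n_1,\dots,n_d)+\sum_{i=1}^k \rank V_i\ \ge\ \rank T+kK(n_1,\dots,n_d).
$$
Taking the minimum over $U_1,\dots,U_{k-1}$ and over the padding entries gives the lower bound.

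For ``$\le$'': here the hypothesis $\rank T\le k$ enters. Write $l=\rank T$ and $T=v_1+\dots+v_l$ with each $v_j$ pure, and choose $U_i:=v_{i+1}+\dots+v_l$ for $1\le i<l$ and $U_i:=0$ for $l\le i\le k-1$. Then $V_i=v_i$ for $i\le l$ and $V_i=0$ for $i>l$, so every $V_i$ has rank $\le 1$ and the equality case of the $B$-lemma gives $\minrank B(V_i)=\rank V_i+K(n_1,\dots,n_d)$. Once the $U_i$ are fixed the blocks of $C$ involve disjoint sets of padding entries, so $\minrank C(T,U_1,\dots,U_{k-1})=\sum_{i=1}^k \minrank B(V_i)=l+kK(n_1,\dots,n_d)=\rank T+kK(n_1,\dots,n_d)$, which matches the lower bound.

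The two ingredients do essentially all the work, so there is no deep obstacle; the step requiring the most care is the lower-bound bookkeeping, since each $U_i$ occurs in the two adjacent blocks $B(V_i)$ and $B(V_{i+1})$, so one cannot split $\minrank C$ into independent per-block minimizations. The fix, carried out above, is to note that $\rank C\ge \rank T+kK(n_1,\dots,n_d)$ holds for \emph{every} specialization before any minimum is taken, so the shared variables lose nothing; and one uses $\rank T\le k$ precisely so that a minimal rank-$1$ decomposition of $T$ can be spread across the $k$ blocks, with the surplus blocks carrying the zero tensor.
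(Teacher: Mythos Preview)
Your proof is correct and follows the same approach as the paper: the lower bound via block-diagonal rank, the $B$-lemma, and subadditivity of tensor rank, and the upper bound by spreading a minimal rank-$1$ decomposition of $T$ across the $k$ blocks (padding with zeros when $\rank T<k$). Your bookkeeping with $U_0=T$, $U_k=0$, $V_i=U_{i-1}-U_i$ is a bit more explicit than the paper's, and your remark that the padding entries in different blocks are disjoint (so that $\minrank C=\sum_i\minrank B(V_i)$ once the $U_i$ are fixed) is a useful clarification, but the argument is essentially identical.
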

 \begin{proof}
 For all $U_1,\dots,U_{k-1}$ we have
\begin{multline*} 
\rank C(T,U_1,\dots,U_{k-1})=\rank B(T-U_1)+\sum_{i=2}^{k} \rank B(U_i-U_{i-1})\geq \\\geq
\rank(T-U_1)+\sum_{i=1}^k \rank(U_i-U_{i-1})+kK(n_1,\dots,n_d)\geq
\rank T+kK(n_1,\dots,n_d).
\end{multline*}
We can choose $U_1,\dots,U_{k-1}$ such that $T-U_1,U_1-U_2,\dots,U_{k-1}-U_k$ are pure tensors.
In that case we have equality.
\end{proof}
 
 {\bf Acknowledgment.} The author would like to thank Bernard Mourrain for pointing out some references.

 \end{document}